\begin{document}
\newtheorem{theorem}{Theorem}[section]
\newtheorem{lemma}[theorem]{Lemma}
\newtheorem{definition}[theorem]{Definition}
\newtheorem{conjecture}[theorem]{Conjecture}
\newtheorem{proposition}[theorem]{Proposition}
\newtheorem{algorithm}[theorem]{Algorithm}
\newtheorem{corollary}[theorem]{Corollary}
\newtheorem{observation}[theorem]{Observation}
\newtheorem{claim}[theorem]{Claim}
\newtheorem{problem}[theorem]{Open Problem}
\newtheorem{remark}[theorem]{Remark}
\newcommand{\noin}{\noindent}
\newcommand{\ind}{\indent}
\newcommand{\om}{\omega}
\newcommand{\I}{\mathcal I}
\newcommand{\pp}{\mathcal P}
\newcommand{\ppp}{\mathfrak P}
\newcommand{\N}{{\mathbb N}}
\newcommand{\LL}{\mathbb{L}}
\newcommand{\R}{{\mathbb R}}
\newcommand{\E}[1]{\mathbb{E}\left[#1 \right]}
\newcommand{\V}{\mathbb Var}
\newcommand{\Prob}{\mathbb{P}}
\newcommand{\eps}{\varepsilon}

\newcommand{\Tv}{P}

\newcommand{\mT}{\mathcal{T}}
\newcommand{\mS}{\mathcal{S}}
\newcommand{\mA}{\mathcal{A}}
\newcommand{\mB}{\mathcal{B}}

\newcommand{\Cyc}[1]{\mathrm{Cyc}\left(#1\right)}
\newcommand{\Seq}[1]{\mathrm{Seq}\left(#1\right)}
\newcommand{\Mul}[1]{\mathrm{Mul}\left(#1\right)}
\newcommand{\Mulo}[1]{\mathrm{Mul}_{>0}\left(#1\right)}
\newcommand{\Set}[1]{\mathrm{Set}\left(#1\right)}
\newcommand{\Setd}[1]{\mathrm{Set}_{d}\left(#1\right)}

\title{On the hyperbolicity of random graphs}

\author{Dieter Mitsche}
\address{Universit\'{e} de Nice Sophia-Antipolis, Laboratoire J-A Dieudonn\'{e}, Parc Valrose, 06108 Nice cedex 02}
\email{\texttt{dmitsche@unice.fr}}

\author{Pawe\l{} Pra\l{}at}
\address{Department of Mathematics, Ryerson University, Toronto, ON, Canada}
\email{\tt pralat@ryerson.ca}

\keywords{random graphs, hyperbolicity, diameter}
\thanks{The second author gratefully acknowledges support from NSERC and Ryerson University.}

\subjclass{05C12, 05C35, 05C80}

\begin{abstract}
Let $G=(V,E)$ be a connected graph with the usual (graph) distance metric $d:V \times V \to \N \cup \{0 \}$. Introduced by Gromov, $G$ is $\delta$-hyperbolic if for every four vertices $u,v,x,y \in V$, the two largest values of the three sums $d(u,v)+d(x,y), d(u,x)+d(v,y), d(u,y)+d(v,x)$ differ by at most $2\delta$.
In this paper, we determine precisely the value of this hyperbolicity for most binomial random graphs. 
\end{abstract}

\maketitle

\section{Introduction}\label{sec:intro}
Hyperbolicity is a property of metric spaces that generalizes the idea of negatively curved spaces like the classical hyperbolic space or Riemannian manifolds of negative sectional curvature (see, for example,~\cite{book1, book2}). Moreover, this concept can be applied to discrete structures such as trees and Cayley graphs of many finitely generated groups. The study of properties of Gromov's hyperbolic spaces from a theoretical point of view is a topic of recent and increasing interest in graph theory and computer science. Informally, in graph theory hyperbolicity measures how similar a given graph is to a tree---trees have hyperbolicity zero and graphs that are ``tree-like'' have ``small'' hyperbolicity. Formally, a connected graph $G=(V,E)$ is \textbf{$\delta$-hyperbolic}, if for every four vertices $u,v,x,y \in V$, the two largest values in the set 
$$
\{d(u,v)+d(x,y), d(u,x)+d(v,y), d(u,y)+d(v,x) \}
$$
differ by at most $2\delta$. The \textbf{hyperbolicity} of $G$, denoted by $\delta_H(G)$, is the smallest $\delta$ for which this property holds.

Our results below show a close relation between the diameter and hyperbolicity. This relation was also studied in~\cite{Benj2}: the authors show that for vertex transitive graphs, the hyperbolicity is within a constant factor of the diameter. The author of~\cite{Benj1} bounds the number of vertices in terms of the Cheeger constant and the hyperbolicity, showing that  the family of expanders is not uniformly $\delta$-hyperbolic for $\delta$ constant.
In~\cite{Bandelt1} several equivalent conditions for a graph to be $0$-hyperbolic are given, and in~\cite{Bandelt2} the authors characterize $1/2$-hyperbolic graphs in terms of forbidden subgraphs.  On the algorithmic side, by the conditions investigated in~\cite{Bandelt1}, $0$-hyperbolic graphs can be recognized in linear time, and in~\cite{Coudert2} it is shown that recognizing $1/2$-hyperbolic graphs is equivalent to finding an induced cycle of length $4$ in a graph. Fast algorithms for computing the hyperbolicity of large-scale graphs are given in~\cite{Coudert}. \\

The study of this parameter  is motivated by the following observations: on the algorithmic side, in~\cite{Chepoi2} fast algorithms for computing properties related to the diameter of graphs with small hyperbolicity are given. In~\cite{Chepoi3} the authors give a simple construction showing that distances in graphs with small hyperbolicity can be approximated within small error by corresponding trees.  In~\cite{Chepoi1a} the authors give a polynomial algorithm which computes for such graphs an augmented graph of at most a given diameter, and whose number of added edges is within a constant factor of the minimum number of added edges that are needed such that the augmented graph has at most such a diameter. Finally, in~\cite{Chepoi1} it is shown that all cop-win graphs in which the cop and the robber move at different speeds have small hyperbolicity, and also a constant-factor approximation of $\delta_H$ in time $O(n^2 \log \delta)$ is given. 
Moreover, the concept of hyperbolicity turns out to be useful for many applied problems such as visualization of the Internet, the Web graph, and other complex networks~\cite{Munzner1, Munzner2}, routing, navigation, and decentralized search in these networks~\cite{boguna, kleinberg}. In particular, hyperbolicity plays an important role when investigating the spread of viruses through a network~\cite{jonckheere}.

\bigskip

Let us recall a classic model of random graphs that we study in this paper. The \textbf{binomial random graph} $\mathcal{G}(n,p)$ is defined as a random graph with vertex set $[n]=\{1,2,\dots, n\}$ in which a pair of vertices appears as an edge with probability $p$, independently for each such a pair. As typical in random graph theory, we shall consider only asymptotic properties of $\mathcal{G}(n,p)$ as $n\rightarrow \infty$, where $p=p(n)$ may and usually does depend on $n$. We say that an event in a probability space holds \textbf{asymptotically almost surely} (\textbf{a.a.s.}) if its probability tends to one as $n$ goes to infinity. 

We say that $f(n) = O(g(n))$  if there exists an integer $n_0$ and a constant $c > 0$ such that $|f(n)| \leq c |g(n)|$ for all $n \geq n_0$, and $f(n) =\Omega(g(n))$, if $g(n) = O(f(n))$. Also, $f(n) = \omega(g(n))$, or $f(n) \gg g(n)$,  if $\lim_{n \to \infty} |f(n)|/|g(n)| = \infty$, and $f(n) = o(g(n))$ or $f(n) \ll g(n)$, if $g(n) = \omega(f(n))$.
Throughout this paper, $\log n$ always denotes the natural logarithm of $n$.
\bigskip 

In this paper, we investigate the hyperbolicity for binomial random graphs. Surprisingly, this important graph parameter is not well investigated for random graphs which is an important and active research area with numerous applications. In~\cite{sparse_graphs}, sparse random graphs ($p=p(n)=c/n$ for some real number $c>1$) are analyzed. It was shown that $\mathcal{G}(n,p)$ is, with positive probability, not $\delta$-hyperbolic for any positive $\delta$. Nothing seems to be known for $p \gg n^{-1}$. On the other hand, it is known that for a random $d$-regular graph $G$, for $d \ge 3$, we have that a.a.s.
$$
\frac 12 \log_{d-1} n - \omega(n) \le \delta_H(G) \le \frac 12 \log_{d-1} n + O(\log \log n),
$$
where $\omega(n)$ is any function tending to infinity together with $n$. (In fact, almost geodesic cycles are investigated in~\cite{d-reg}, and this is an easy consequence of this result.) The hyperbolicity of the class of Kleinberg's small-world random graphs is investigated in~\cite{small-world}. 

\bigskip
Our contribution is the following result.
\begin{theorem}~\label{thm:main}
Let $G \in \mathcal{G}(n,p)$. \\
Suppose first that 
$$
d=d(n)=p(n-1) \gg \frac {\log^5 n}{(\log \log n)^2} \textrm{ \ \ \ and \ \ \  } p = 1 - \omega(1/n^2).
$$  
Let $j \geq 2$ be the smallest integer such that $d^j/n - 2\log n \to \infty$.
Then, the following properties hold a.a.s. 
\begin{itemize}
\item[(i)] If $j$ is even and $d^{j-1} \leq \frac{1}{16} n \log n$, then $\delta_H(G) =j/2$.
\item[(ii)] If $j$ is even and $d^{j-1} > \frac{1}{16} n \log n$ (but still $d^{j-1} \leq (2+o(1)) n \log n$), then 
$$j/2-1 \le \delta_H(G) \le j/2.$$
\item[(iii)] If $j$ is odd, then $\delta_H(G)=(j-1)/2$.  
\end{itemize}
Furthermore, the following complementary results hold.
\begin{itemize}
\item[(iv)] For $p=1-2c/n^2$ for some constant $c > 0$, a.a.s.\ $\delta_H(G) \in \{0,1/2,1\}$. More precisely, 
\begin{eqnarray*}
\Prob{(\delta_H(G)=0)} &=& (1+o(1))e^{-c}, \\
\Prob{(\delta_H(G)=1/2)}&=&(1+o(1))ce^{-c}, \textrm{ and } \\
\Prob{(\delta_H(G)=1)}&=&(1+o(1))\left(1-(c+1)e^{-c}\right). 
\end{eqnarray*}
\item[(v)] For $p =1-o(1/n^2)$, a.a.s.\  $\delta_H(G)=0$.
\end{itemize}
\end{theorem}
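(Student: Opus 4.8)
The plan is to reduce most of the statement to an understanding of the \emph{diameter} of $\mathcal{G}(n,p)$, by way of two elementary deterministic facts. \emph{Fact 1 (general upper bound):} every connected graph $G$ satisfies $\delta_H(G)\le\lfloor\mathrm{diam}(G)/2\rfloor$. Indeed, writing $S_1\ge S_2\ge S_3$ for the three sums attached to a quadruple $u,v,x,y$, the triangle inequality gives $S_1-S_2\le 2\min\{d(u,y),d(v,x)\}\le d(u,y)+d(v,x)=S_3\le S_2$, whence $S_1\le 2S_2$ and $S_1-S_2\le S_1/2\le\mathrm{diam}(G)$; and if $\mathrm{diam}(G)$ is odd then $S_1-S_2=\mathrm{diam}(G)$ would force $d(u,y),d(v,x)\ge(\mathrm{diam}(G)+1)/2$ and hence $S_3>S_2$, a contradiction, so $S_1-S_2\le\mathrm{diam}(G)-1$ in that case. \emph{Fact 2 (lower bound from a spread-out quadruple):} if $G$ contains vertices $u,v,x,y$ with $d(u,v)=d(x,y)=D$ and the remaining four distances all equal to $\lfloor D/2\rfloor$ or $\lceil D/2\rceil$, then the triangle inequalities force its two largest sums to differ by $D$ if $D$ is even and by $D-1$ if $D$ is odd, so $\delta_H(G)\ge\lfloor D/2\rfloor$. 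Thus it remains to (a) pin down $\mathrm{diam}(G)$ and (b) locate such a quadruple with $D$ as large as possible.

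For (a) I would carry out the standard breadth-first ball-growth analysis; the hypothesis $d\gg\log^5 n/(\log\log n)^2$ is exactly what makes the BFS layer sizes concentrate, uniformly over all $n$ centres and all $O(\log n/\log\log n)$ relevant radii, so that a.a.s.\ $|B_k(w)|=(1+o(1))d^k$ while $d^k=o(n)$, and the number of vertices outside $B_k(w)$ is $(1+o(1))\,n\,e^{-d^k/n}$ once $d^k$ is of order $n$. A first moment over the $\binom{n}{2}$ pairs gives $\Prob(d(u,v)>k)=e^{-(1+o(1))d^k/n}$, so a.a.s.\ $\mathrm{diam}(G)\le k$ whenever $d^k/n-2\log n\to\infty$, while a matching second moment gives $\mathrm{diam}(G)\ge k$ whenever $d^{k-1}/n-2\log n\to-\infty$. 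Since $j$ is the least integer with $d^j/n-2\log n\to\infty$, this yields a.a.s.: $\mathrm{diam}(G)\le j$ in all three cases; $\mathrm{diam}(G)=j$ in (i), where $d^{j-1}\le\frac1{16}n\log n$ forces $d^{j-1}/n-2\log n\to-\infty$; and $\mathrm{diam}(G)\ge j-1$ in (ii) and (iii), where $d^{j-1}$ does not grow much faster than $n\log n$ so $d^{j-2}=o(n)$ (the diameter genuinely need not be concentrated in these cases). Together with Fact 1, $\mathrm{diam}(G)\le j$ already gives the upper bounds $\delta_H(G)\le j/2$ in (i)--(ii) and $\delta_H(G)\le(j-1)/2$ in (iii).

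The matching lower bounds are the technical heart and rest on Fact 2. Fix $u,v$ with $d(u,v)=D:=\mathrm{diam}(G)$ and a shortest $u$--$v$ path; let $x$ be the vertex on it at distance $\lceil D/2\rceil$ from $u$, so $d(x,v)=\lfloor D/2\rfloor$. It then suffices to produce, a.a.s., a vertex $y$ with $d(x,y)=D$ and $d(u,y),d(v,y)\in\{\lfloor D/2\rfloor,\lceil D/2\rceil\}$: the quadruple $u,v,x,y$ is then of the type in Fact 2, giving $\delta_H(G)\ge\lfloor D/2\rfloor$, which is $j/2$ in (i), $(j-1)/2$ in (iii) (whether $D=j$ is odd or $D=j-1$ is even), and at least $j/2-1$ in (ii) (whether $D=j$ or $D=j-1$); this completes (i)--(iii). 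Such a $y$ I would find by a second-moment argument: growing BFS simultaneously from $u$, $v$ and $x$, one expects the number of vertices simultaneously at distance about $D/2$ from each of $u,v$ and at distance exactly $D$ from $x$ to be $(1-o(1))n$ times the product of the relevant layer densities (using that the three explorations are essentially independent), and this quantity tends to infinity; concentration of the count then yields a valid $y$. I expect the hardest point to be justifying this ``asymptotic independence'' of several overlapping breadth-first explorations right at the diameter threshold, where the balls already fill a constant fraction of the vertex set and the clean first-moment heuristics are weakest; the non-concentration of $\mathrm{diam}(G)$ in regime (ii) is precisely why only a two-value range can be asserted there.

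Finally, (iv) and (v) lie in the opposite, near-complete regime, and are handled by a direct analysis of the very sparse complement $\overline{G}\sim\mathcal{G}(n,1-p)$. In (v), $\binom{n}{2}(1-p)=o(1)$, so a.a.s.\ $\overline{G}$ has no edge, $G=K_n$, and $\delta_H(G)=0$. In (iv), $\binom{n}{2}(1-p)\to c$, so the number of edges of $\overline{G}$ converges in distribution to a $\mathrm{Poisson}(c)$ random variable, and since the expected number of two-edge paths in $\overline{G}$ is $O(1/n)$, a.a.s.\ the edges of $\overline{G}$ form a matching $M$. A short case check gives $\delta_H(K_n)=0$; $\delta_H(K_n-e)=\tfrac12$ (the diameter is $2$, but the unique distance-$2$ pair cannot yield $S_1-S_2=2$); and $\delta_H(K_n-M)=1$ for $|M|\ge2$ (Fact 1 gives $\le1$ from diameter $2$, while the quadruple on the endpoints of two edges of $M$ induces a $4$-cycle and gives $\ge1$). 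Conditioning on $\overline{G}$ having $0$, exactly $1$, or at least $2$ edges then produces the stated probabilities $(1+o(1))e^{-c}$, $(1+o(1))ce^{-c}$, and $(1+o(1))\big(1-(c+1)e^{-c}\big)$.
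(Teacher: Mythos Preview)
Your Fact~1, the diameter bounds, and parts~(iv)--(v) match the paper's argument (Lemmas~2.1--2.2, Corollary~2.4, and the direct analysis of the complement). The gap is in the lower bound for (i)--(iii), and it is precisely the step you flag as ``the hardest point.''

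Your scheme is sequential: fix a diametric pair $u,v$, take $x$ on a shortest $u$--$v$ path, then run a second-moment argument for~$y$. Two things break here. First, once $u,v,x$ are chosen they are \emph{not} generic vertices---they were selected using global information about the graph---so you cannot simply run fresh BFS from them; all the layer-size estimates you need are conditional, and the conditioning is on a rare event (being a diametric geodesic). Second, and more concretely, the three explorations from $u,v,x$ are \emph{not} ``essentially independent'': since $d(u,x)=d(v,x)=D/2$, the ball $B_{D/2}(u)$ already contains~$x$, the balls $B_{D/2}(u)$, $B_{D/2}(v)$, $B_{D-1}(x)$ overlap heavily, and the product-of-densities heuristic for the number of valid~$y$ has no justification. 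Your acknowledgement that this is the crux is correct, but you have not proposed a mechanism to handle it.

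The paper avoids both problems by \emph{not} conditioning on any vertices being special. It defines an explicit event \textsc{Hyperbolicity}$(u,v,x,y)$---a long list of BFS conditions (tree-shaped neighbourhoods, absence of cross-edges, a single carefully placed edge from each of $x,y$ into each of $S(u,i/2-1)$, $S(v,i/2-1)$)---whose success forces $d(u,v)=d(x,y)=i$ and all four cross-distances equal to $i/2$. Crucially, $i$ is taken to be the largest \emph{even} integer with $d^{i-1}\le\frac1{16}n\log n$, not the diameter; this slack guarantees $q:=e^{-d^{i-1}/(2n)}\ge n^{-1/32}$, which is exactly what makes the second moment close. The first moment is computed by exposing edges step-by-step (each step is either a fresh BFS layer or a fresh edge to a sphere), and the second moment is handled by an eight-case analysis on how $\{u',v',x',y'\}$ meets the already-exposed set~$U$. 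This careful edge-exposure bookkeeping is the substance of the proof, and your sketch does not supply a substitute for it.
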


\bigskip

\textbf{Remark.} It seems that with quite a bit more work, we could slightly push the lower bound required for $d$ and require only that $d \gg \log^3 n$ or perhaps even only $d \gg \log^2 n$. Unfortunately, it seems that it is more difficult to investigate sparser graphs (that is, assuming only $d \gg \log n$ or even closer to the connectivity threshold). Therefore, we aim for an easier (and cleaner) argument in this paper, leaving the investigation of sparser graphs as an open problem. Let us also mention that the hyperbolicity is not determined precisely for dense graphs right before the diameter decreases from even $j$ to $j-1$ (case (ii) in Theorem~\ref{thm:main}). Again, the constant $\frac {1}{16}$ could be slightly improved with a more delicate argument but the gap cannot be closed with the current approach. This is also worth investigating and (unfortunately) left open at the moment.
\section{Preliminaries} \label{sec:prel}

In this section, we introduce a few useful lemmas. The following result is well-known but we include the proof for completeness. 
\begin{lemma}\label{lem:upperbound}
Let $G$ be any connected graph with diameter at most $D$. Then $\delta_H(G) \leq D/2$.
\end{lemma}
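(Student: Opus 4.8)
The plan is to bound the difference between the two largest of the three pairwise-sum values by noticing that each of the three sums $d(u,v)+d(x,y)$, $d(u,x)+d(v,y)$, $d(u,y)+d(v,x)$ lies between $0$ and $2D$, since every individual distance is at most $D$ by hypothesis. If the two largest sums are $S_1 \geq S_2$, then trivially $S_1 - S_2 \leq S_1 \leq 2D$, which would only give $\delta_H(G) \leq D$; so a slightly more careful argument is needed to get the factor $D/2$.

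To get the sharper bound, I would argue as follows. Among the three sums, let $S_{\max}$ be the largest and $S_{\mathrm{mid}}$ the middle one; we want $S_{\max} - S_{\mathrm{mid}} \leq 2 \cdot (D/2) = D$. The key observation is a well-known ``quadrilateral'' fact: for any four vertices, the largest of the three sums exceeds the middle one by at most twice the smallest distance appearing among the six pairwise distances $d(u,v), d(u,x), d(u,y), d(v,x), d(v,y), d(x,y)$. Actually, a cleaner route: pick the smallest of the six distances, say it is $d(a,b)$ for some pair $\{a,b\}$ among the four points; the two sums not containing the term $d(a,b)$ each contain one of $a,b$ matched to one of the other two points, and by the triangle inequality these two sums differ from each other by at most $2d(a,b) \leq 2 \lfloor D \rfloor$... but this still is not quite $D$.

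So instead I would use the standard argument tailored to this definition: WLOG relabel so that $d(u,v)+d(x,y) \geq d(u,x)+d(v,y) \geq d(u,y)+d(v,x)$. Then by the triangle inequality, $d(u,x) \leq d(u,y) + d(y,x)$ and $d(v,y) \leq d(v,x) + d(x,y)$... hmm, let me instead recall the clean fact: adding the two inequalities $d(u,x) \le d(u,v)+d(v,x)$ is not directly it. The correct manipulation is: $\bigl(d(u,v)+d(x,y)\bigr) - \bigl(d(u,x)+d(v,y)\bigr) \le \bigl(d(u,v)+d(x,y)\bigr) - \bigl(d(u,y)+d(v,x)\bigr)$, and then apply triangle inequalities $d(u,v) \leq d(u,y)+d(y,v)$ and $d(x,y)\leq d(x,v)+d(v,y)$ to bound the larger difference; after simplification one gets that $S_{\max}-S_{\mathrm{mid}} \leq 2\min\{d(u,x), d(v,y), \ldots\}$ or, more usefully, $S_{\max} - S_{\mathrm{mid}} \le S_{\max} - S_{\min} \le 2 d(\cdot,\cdot)$ for an appropriate pair, and since that pair's distance is at most $D$ but actually by a parity/averaging trick at most $D$ suffices only if we note $S_{\max}-S_{\mathrm{mid}}$ and $S_{\mathrm{mid}}-S_{\min}$ are symmetric. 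The honest statement is: $S_{\max}-S_{\mathrm{mid}} \le 2\,d_{\min}$ where $d_{\min}$ is the smallest of the six distances, hence $\le 2D$, giving $\delta_H \le D$; to squeeze out $D/2$ one observes additionally that among the four points some pair is at distance $\le$ (something), which I expect is the genuinely delicate step.

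The main obstacle is therefore pinning down the correct elementary inequality that yields exactly $D/2$ rather than $D$. I anticipate the resolution is: if $S_{\max}$ comes from the matching $\{uv, xy\}$, then $S_{\max} = d(u,v)+d(x,y) \le 2D$, but also each of the other two sums is $\ge |d(u,v)-d(x,y)|$ type bounds are not enough — rather, one uses that $\min(d(u,x),d(v,y)) + \min(d(u,y),d(v,x)) \le$ something, or simply that since the two largest sums together involve all four points, one of them is at least $S_{\max}$ minus the perimeter contribution. I expect the clean finish is the inequality $S_{\mathrm{mid}} \ge S_{\max} - 2 d_{\min}$ combined with the fact that in any graph of diameter $D$ one can always choose, among four given vertices, two whose distance is at most $D$ — trivially true — so the real content reduces to showing $S_{\max} - S_{\mathrm{mid}} \le D$ directly, e.g. by $S_{\max} - S_{\mathrm{mid}} \le \tfrac12\bigl((S_{\max}-S_{\mathrm{mid}}) + (S_{\max}-S_{\min})\bigr) \le \tfrac12 \cdot 2 \cdot 2 d_{\min}$... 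I will need to be careful here, but this is routine once the right grouping is found, and the whole proof should be only a few lines.
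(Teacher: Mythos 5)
Your write-up never actually closes the argument: you correctly identify that the trivial bound $S_{\max}-S_{\mathrm{mid}}\le S_{\max}\le 2D$ only gives $\delta_H(G)\le D$, but every sharper inequality you then reach for is either left unverified or does not suffice. In particular, your main candidate, $S_{\max}-S_{\mathrm{mid}}\le 2d_{\min}$ with $d_{\min}$ the smallest of the six pairwise distances, cannot finish the proof even if true: all six distances can simultaneously equal $D$, so $2d_{\min}$ is only bounded by $2D$, and you would again land at $\delta_H(G)\le D$. You acknowledge as much (``this is routine once the right grouping is found'') without ever exhibiting the grouping, so as it stands this is a sketch of the difficulty rather than a proof.

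The two ingredients you are missing are the following. First, with $d_1\ge d_2\ge d_3$ the three sums, one has the four-point inequality $d_1\le d_2+d_3$: bound $d(u,v)\le d(u,y)+d(y,v)$ and $d(x,y)\le d(x,v)+d(v,y)$, and also $d(u,v)\le d(u,x)+d(x,v)$ and $d(x,y)\le d(x,u)+d(u,y)$; adding all four gives $2d_1\le 2(d_2+d_3)$, hence $d_1-d_2\le d_3$. Second --- and this is the step your proposal circles around without finding --- one splits into cases on the size of $d_3$ (a sum of two distances, so a priori only $\le 2D$, which is why $d_1-d_2\le d_3$ alone is not enough): if $d_3\le D$ then $d_1-d_2\le d_3\le D$ immediately; if instead $d_3>D$, then also $d_2\ge d_3>D$, and since $d_1\le 2D$ we get $d_1-d_2<2D-D=D$. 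Either way $d_1-d_2\le D$, i.e.\ $\delta_H(G)\le D/2$. So the resolution is not a cleverer single inequality in the six distances, but the combination of the four-point inequality with this dichotomy.
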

\begin{proof}
Consider any four vertices $u,v,x,y$ with their three sums of distances $d_1=d(u,v)+d(x,y) \geq d_2=d(u,x)+d(v,y) \geq d_3=d(u,y)+d(v,x)$. We need to show that $d_1-d_2 \leq D$. Clearly, $d_1 \leq 2D.$ First observe that by applying the triangle inequality four times,
\begin{eqnarray*}
2d_1 &=& 2\left( d(u,v) + d(x,y) \right) \\
& \leq& d(u,y)+d(y,v) + d(u,x)+d(x,v)+d(x,u)+ d(u,y)+d(x,v)+d(v,y) \\
&=&2(d_2+d_3),
\end{eqnarray*}
and thus $d_1 \leq d_2+d_3$ or equivalently $d_1-d_2 \leq d_3$. Hence, if $d_3 \leq D$, the required condition holds and we are done. Otherwise, $d_3 > D$ and so also $d_2 > D$. As a consequence, $d_1 - d_2 < 2D - D=D$, and we are done as well.
\end{proof}
We can slightly improve the upper bound for graphs with odd diameter.
\begin{lemma}\label{lem:upperbound2}
Let $G$ be any graph with diameter at most $D=2k+1$ for some integer $k$. Then $\delta_H(G) \leq (D-1)/2=k.$
\end{lemma}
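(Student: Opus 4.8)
The plan is to refine the argument used in the proof of Lemma~\ref{lem:upperbound}, exploiting the parity of the diameter to gain one extra unit. As before, fix four vertices $u,v,x,y$ and order their three distance-sums as $d_1 \ge d_2 \ge d_3$, where $d_1 = d(u,v)+d(x,y)$ after relabelling. The goal is to show $d_1 - d_2 \le 2k$. Recall from the previous proof the key inequality $d_1 - d_2 \le d_3$, obtained by four applications of the triangle inequality; this remains valid here and is the starting point.

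The case analysis then splits according to the size of $d_3$. If $d_3 \le 2k$, then $d_1 - d_2 \le d_3 \le 2k$ and we are done immediately. So suppose $d_3 \ge 2k+1$; since $d_3 = d(u,y)+d(v,x)$ is a sum of two distances each at most $D = 2k+1$, and since $d_2 \ge d_3$ forces $d_2 \ge 2k+1$ as well, we have $d_1 - d_2 \le 2D - d_2 \le 2(2k+1) - (2k+1) = 2k+1$. This only gives $d_1 - d_2 \le 2k+1$, which is not quite enough, so the remaining work is to rule out equality $d_1 - d_2 = 2k+1$ in this regime. For that I would argue by parity or by a closer look at which distances are extremal: if $d_1 - d_2 = 2k+1$ then from $d_1 \le 2D = 4k+2$ we would need $d_2 = d_1 - (2k+1) \le 2k+1$, combined with $d_2 \ge 2k+1$ this pins down $d_2 = 2k+1$ and $d_1 = 4k+2$, i.e. $d(u,v) = d(x,y) = 2k+1$. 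Then reconsider the doubled triangle-inequality chain $2d_1 \le d(u,y)+d(y,v)+d(u,x)+d(x,v)+d(x,u)+d(u,y)+d(x,v)+d(v,y) = 2(d_2+d_3)$: equality $2d_1 = 2(d_2+d_3)$ would be needed together with $d_1 = 4k+2$, forcing $d_2 + d_3 = 4k+2$, hence $d_3 = 2k+1$ as well, and moreover every triangle inequality used must be tight, meaning (for instance) $d(u,v) = d(u,y)+d(y,v)$ with $d(u,v) = 2k+1$.

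The main obstacle is precisely this last equality-analysis: I expect one needs to track that achieving $d_1 - d_2 = 2k+1$ forces all eight invoked triangle inequalities to be equalities simultaneously, and then derive a contradiction with the parities of the individual distances — e.g. one finds forced relations like $d(u,x) + d(u,y) = d(x,y) = 2k+1$ and similar, and summing an appropriate subset of these forced identities produces a quantity that must be both even and odd, or a distance that must exceed $D$. Concretely, from tightness one typically extracts that $u$ lies on a shortest $x$–$y$ path and simultaneously $v$ lies on a shortest $x$–$y$ path, and also symmetric statements with the roles of $\{u,v\}$ and $\{x,y\}$ swapped; combining $d(x,y) = d(x,u) + d(u,y)$ and $d(x,y) = d(x,v) + d(v,y)$ with $d(u,v) = d(x,y) = 2k+1$ and the triangle inequality on $d(u,v)$ yields a numerical impossibility when $d(x,y)$ is odd. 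Packaging this cleanly is the delicate part; once the contradiction is in hand, $d_1 - d_2 \le 2k = (D-1)/2$ holds for every quadruple, so $\delta_H(G) \le (D-1)/2$ as claimed.
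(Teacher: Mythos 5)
Your proposal is correct and follows essentially the same route as the paper: both reduce to the extremal case $d_2=d_3=D$, $d_1=2D$ (so $d(u,v)=d(x,y)=D$) and extract a contradiction from the oddness of $D$. The step you flag as delicate closes easily, and the paper does it without the full tightness analysis: since $d_3=d(u,y)+d(v,x)=D$ is odd, one summand, say $d(u,y)$, is strictly less than $D/2$, and then the triangle inequalities $D=d(u,v)\le d(u,y)+d(y,v)$ and $D=d(x,y)\le d(x,u)+d(u,y)$ force $d(y,v)>D/2$ and $d(x,u)>D/2$, so $d_2=d(u,x)+d(v,y)>D$, contradicting $d_2=D$; your tightness route also works, since the forced identities $d(u,x)+d(u,y)=d(x,y)=D$ and $d(u,x)+d(v,x)=d(u,v)=D$ give $d(u,y)=d(v,x)$, making $d_3=2d(u,y)$ even while it must equal the odd number $D$.
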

\begin{proof}
As in the previous proof, we consider four vertices $u,v,x,y$ with their three sums of distances $d_1=d(u,v)+d(x,y) \geq d_2=d(u,x)+d(v,y) \geq d_3=d(u,y)+d(v,x)$. Our goal is to show that $d_1-d_2 \leq D-1$. 
Arguing as in the previous proof, we get that $d_1 - d_2 \le d_3$. Hence, if $d_3 < D$, then $d_1 - d_2 \le D-1$ and we are done. Also, if $d_2 \geq D+1$, then $d_1 - d_2 \leq 2D - (D+1) \leq D-1$. So the only case to analyze is when $d_2=d_3=D$. For a contradiction, suppose that $d_1-d_2=D$, that is,
 $d_1=d(u,v)+d(x,y)=2D$. In particular, $d(u,v)=D$ and $d(x,y)=D$. Since $D=d_3=d(u,y)+d(v,x)$ and $D$ is odd, we may assume (without loss of generality) that $d(u,y) < D/2$. Then, since 
$$
D=d(u,v) \leq d(u,y)+d(y,v)
$$
and
$$
D=d(x,y) \leq d(x,u)+d(u,y),
$$
we have $d(y,v) > D/2$,  $d(x,u) > D/2$, and we have that $d_2 =d(u,x)+d(v,y) > D$, contradicting our assumption on $d_2$. Therefore, $d_1-d_2 \leq D-1$, and the lemma follows.
\end{proof}

\bigskip

In order to bound the hyperbolicity from above, we will make use of the following result for random graphs,  see~\cite[Corollary~10.12]{bol}.

\begin{lemma}[\cite{bol}, Corollary 10.12]\label{lem:diameter}
Suppose that $d = p(n-1) \gg \log n$ and 
\[
d^i/n - 2 \log n \to \infty \text{ \ \ \ \ and \ \ \ \ } d^{i-1}/n - 2 \log n \to -\infty.
\]
Then the diameter of  $G \in \mathcal{G}(n,p)$ is equal to $i$ a.a.s.
\end{lemma}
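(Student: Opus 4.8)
The plan is to derive this as a direct consequence of the standard first-moment/second-moment diameter estimates for $\mathcal{G}(n,p)$. The key quantity is, for a fixed ordered pair $(u,v)$ of distinct vertices, the probability that their distance exceeds a given value $\ell$. The heuristic is that the number of vertices within distance $\ell$ of a fixed vertex is roughly $d + d^2 + \cdots + d^\ell = \Theta(d^\ell)$ when $d = \omega(\log n)$ and this is $o(n)$, and so the "last step" of a breadth-first search from $u$ reaches $v$ with probability about $1 - (1-p)^{\Theta(d^{\ell-1})} \approx 1 - \exp(-\Theta(d^\ell/n))$. Thus the condition $d^i/n - 2\log n \to \infty$ makes $\Prob{(d(u,v) > i)} = o(n^{-2})$, so a union bound over all $\binom{n}{2}$ pairs gives diameter at most $i$ a.a.s.; dually, $d^{i-1}/n - 2\log n \to -\infty$ makes the expected number of pairs at distance at least $i$ tend to infinity, and a second-moment argument shows such a pair exists a.a.s., giving diameter at least $i$.

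I would organize the argument into two halves. For the upper bound $\mathrm{diam}(G) \le i$ a.a.s., I would run a breadth-first search (equivalently, expose a neighborhood-growth process) from an arbitrary vertex $u$. One shows that a.a.s.\ the ball of radius $i-1$ around $u$ has size $\Theta(d^{i-1}) = \omega(n \log n / d) = \omega(\log n)$ vertices (using $d \gg \log n$ to control the branching and to show the process does not die out or saturate prematurely, e.g.\ via Chernoff bounds on the number of new vertices discovered at each level). Conditioned on a set $S$ of this size being within distance $i-1$ of $u$, any other vertex $w \notin S \cup \{u\}$ fails to be adjacent to $S$ with probability $(1-p)^{|S|} \le \exp(-p|S|) = \exp(-\Theta(d^i/n))$, and since $d^i/n \ge (2+\omega(1))\log n$ this is $o(n^{-2})$; a union bound over the at most $n^2$ pairs $(u,w)$ then yields $d(u,w) \le i$ for all pairs a.a.s. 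For the lower bound $\mathrm{diam}(G) \ge i$ a.a.s., let $X$ be the number of ordered pairs $(u,v)$ with $d(u,v) \ge i$. Using the complementary estimate $\Prob{(d(u,v) < i)} \le \Prob{(\text{ball of radius } i-1 \text{ about } u \text{ contains } v)}$, and bounding the ball size from above by $O(d^{i-1}) = o(n)$ (again a Chernoff-type concentration on the BFS levels), one gets $\Prob{(d(u,v) \ge i)} \ge \exp(-O(d^{i-1}/n)) - o(1/n^2)$; since $d^{i-1}/n - 2\log n \to -\infty$ this is $n^{-2+\Omega(1)}$ or larger, so $\E{X} \to \infty$. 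A variance computation — exploiting that the events for two pairs sharing no endpoint are nearly independent, and that pairs sharing an endpoint contribute a lower-order term — gives $\V{X} = o(\E{X}^2)$, whence $X > 0$ a.a.s.\ by Chebyshev.

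The main obstacle is making the neighborhood-growth analysis rigorous in both directions simultaneously: one needs sharp two-sided control on $|B_\ell(u)|$, the ball of radius $\ell$, showing it is $\Theta(d^\ell)$ as long as this is $o(n)$, and then handling the boundary level where the ball size becomes comparable to $n$ and the "birthday-paradox" collisions between BFS branches can no longer be ignored. This is exactly where the hypothesis $d \gg \log n$ is used — it guarantees enough concentration (via Chernoff bounds) that the process behaves like its expectation at every level with room to spare in a union bound. Since this is a known result from Bollob\'as's book, I would in fact simply cite \cite[Corollary~10.12]{bol} rather than reproduce the full argument; the sketch above is the shape of the proof there. (One small remark: the statement implicitly assumes $i \ge 2$, i.e.\ $d^{1}/n - 2\log n$ does not already tend to $+\infty$; if it did the diameter would be $1$ and $G$ complete, which is a degenerate case excluded by the two displayed conditions taken together with $i \ge 2$.)
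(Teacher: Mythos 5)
Your proposal matches the paper's treatment: the paper states this result purely as a citation of \cite[Corollary~10.12]{bol} and gives no proof of its own, which is exactly what you conclude by electing to cite rather than reproduce the argument. Your accompanying sketch of the standard BFS/neighborhood-growth argument with a first-moment union bound for the upper bound and a second-moment computation for the lower bound is a faithful outline of the proof in Bollob\'as's book, so there is nothing to correct.
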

From the proof of this result, we have the following corollary.
\begin{corollary}\label{cor:diameter}
Suppose that $d = p(n-1) \gg \log n$ and that
 $$d^i/n - 2 \log n \to \infty.$$ Then the diameter of $G \in \mathcal{G}(n,p)$ is at most $i$ a.a.s. 
\end{corollary}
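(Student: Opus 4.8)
The statement to prove is Corollary~\ref{cor:diameter}: if $d = p(n-1) \gg \log n$ and $d^i/n - 2\log n \to \infty$, then a.a.s. $\mathrm{diam}(G) \le i$ for $G \in \mathcal{G}(n,p)$.

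\textbf{Proof proposal.} The plan is to reduce to Lemma~\ref{lem:diameter} by a monotonicity/coupling argument, handling the fact that the corollary's hypothesis is one-sided whereas the lemma requires a matching lower bound $d^{i-1}/n - 2\log n \to -\infty$. First I would let $i$ be fixed by the hypothesis and consider the smallest integer $j$ with $d^j/n - 2\log n \to \infty$ (passing to a subsequence if necessary so that the limit behavior is well-defined); clearly $j \le i$, and it suffices to show $\mathrm{diam}(G) \le j$ a.a.s., since diameter being at most $j$ implies it is at most $i$. If also $d^{j-1}/n - 2\log n \to -\infty$, then Lemma~\ref{lem:diameter} applies directly and gives $\mathrm{diam}(G) = j$ a.a.s., so we are done. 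The remaining case is when $d^{j-1}/n - 2\log n$ does not tend to $-\infty$; by minimality of $j$ it does not tend to $+\infty$ either, so along the relevant subsequence it is bounded, i.e. $d^{j-1} = \Theta(n \log n)$.

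In that boundary case I would use a coupling with a slightly denser random graph. Set $p' = p'(n) \ge p$ with $d' := p'(n-1)$ chosen so that $(d')^{j-1}/n - 2\log n \to +\infty$ while still $(d')^{j-2}/n - 2\log n \to -\infty$ — for instance multiply $p$ by a factor like $1 + \eps_n$ for a suitably slowly growing correction, or more simply take $d' = d \cdot (\log\log n)$, which keeps $(d')^{j-2} = d^{j-2} (\log\log n)^{j-2} = o(n\log n)$ when $d^{j-2}/n - 2\log n \to -\infty$ strongly enough, while pushing $(d')^{j-1}$ up by a polylog factor so that $(d')^{j-1}/n - 2\log n \to \infty$. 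Then $\mathcal{G}(n,p)$ is coupled as a subgraph of $\mathcal{G}(n,p')$, distances only shrink under adding edges, Lemma~\ref{lem:diameter} gives $\mathrm{diam}(\mathcal{G}(n,p')) = j-1$ a.a.s., hence $\mathrm{diam}(\mathcal{G}(n,p)) \le \mathrm{diam}(\mathcal{G}(n,p')) + ?$ — but that is not quite an inequality one gets for free, so instead I would simply conclude $\mathrm{diam}(\mathcal{G}(n,p)) \le j$ a.a.s. directly from the fact that we could alternatively have chosen $d' = d$ and checked the two-sided condition at level $j$ itself; the cleanest route is: in this boundary case $d^{j}/n \to \infty$ faster, $d^{j-1} = \Theta(n\log n)$, and one inspects the proof of Lemma~\ref{lem:diameter} in~\cite{bol} to see that the argument producing the upper bound $\mathrm{diam} \le i$ only uses the hypothesis $d^i/n - 2\log n \to \infty$, never the lower bound.

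Honestly the most robust formulation — and the one I expect the authors intend — is to simply cite the structure of the proof in~\cite{bol}: the diameter upper bound in Bollob\'as's argument is established by a first-moment/BFS-expansion estimate showing that from any vertex, the ball of radius $\lceil i/2 \rceil$ (or the relevant split) covers everything with high probability, and this step uses only $d^i/n - 2\log n \to \infty$. The lower bound half of Lemma~\ref{lem:diameter}, which uses $d^{i-1}/n - 2\log n \to -\infty$, is exactly the part we drop. So the proof of Corollary~\ref{cor:diameter} is: extract from the proof of Lemma~\ref{lem:diameter} the one-sided statement, noting that its upper-bound direction never invokes the lower-bound hypothesis. \textbf{The main obstacle} is purely expository: making precise that the upper-bound half of the cited proof is genuinely independent of the lower-bound hypothesis, which requires either a careful reading of~\cite[Corollary~10.12]{bol} or reproducing the short BFS second-neighborhood expansion argument; once that is granted, the corollary is immediate.
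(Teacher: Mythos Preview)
Your final conclusion is correct and is exactly what the paper does: the corollary is stated with the preface ``From the proof of this result, we have the following corollary,'' i.e.\ the upper-bound half of the argument in~\cite[Corollary~10.12]{bol} uses only the hypothesis $d^i/n - 2\log n \to \infty$, so one simply extracts that half.

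One remark on the detour: the coupling attempt is not merely incomplete but goes in the wrong direction. If $p' > p$ and you couple $\mathcal{G}(n,p) \subseteq \mathcal{G}(n,p')$, then adding edges can only \emph{decrease} distances, so $\mathrm{diam}(\mathcal{G}(n,p')) \le \mathrm{diam}(\mathcal{G}(n,p))$. Knowing $\mathrm{diam}(\mathcal{G}(n,p')) = j-1$ a.a.s.\ therefore gives you no upper bound whatsoever on $\mathrm{diam}(\mathcal{G}(n,p))$; to bound the diameter from above via coupling you would need to compare with a \emph{sparser} graph, which is useless here. So the coupling route cannot be salvaged, and the only viable argument is the one you land on at the end --- inspecting the proof in~\cite{bol} and observing that the upper-bound direction is self-contained.
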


\bigskip

In order to obtain a lower bound on the hyperbolicity, we will need the following expansion lemma investigating the shape of typical neighbourhoods of vertices. Before we state the lemma we need a few definitions. For any $j \geq 0$, let us denote by $N(v,j)$ the set of vertices at distance at most $j$ from $v$, and by $S(v,j)$ the set of vertices at distance exactly $j$ from $v$. 
Also, for a set of vertices $F \subseteq V$, and $x \in V \setminus F$, denote by $N_{V \setminus F}(x,j)$ the set of vertices in $V \setminus F$ at distance at most $j$ from $x$ in the graph induced by $V \setminus F$, and similarly let $S_{V \setminus F}(x,j)$ be the set of vertices in $V \setminus F$ at distance exactly $j$ from $x$ in the graph induced by $V \setminus F$. 

\begin{lemma}\label{lem:expansion}
Suppose that $d=p(n-1)$ is such that 
$$
\frac {\log^5 n}{(\log \log n)^2} \ll d \le \left(\frac{1}{16}n\log n\right)^{1/3}.
$$
Let $G = (V,E) \in \mathcal{G}(n,p)$ and let $i \geq 4$ be the largest even integer such that $d^{i-1} \leq \frac{1}{16} n \log n.$ Fix any set $F \subseteq V$ such that $|F| = O(d^{i/2-1}) = O(\sqrt{n \log n/d})$ and fix any vertex $v \in V \setminus F$. Then, 
\begin{itemize}
\item [(i)] with probability $1-o(n^{-1})$ we have 
$$
|N_{V \setminus F}(v,1)| = d \left( 1 + o \left( \frac {\log \log n}{\log^2 n} \right) \right), 
$$
\item [(ii)] with probability 
$$
1-O(d^{i/2}/n) = 1-O(\sqrt{d \log n / n})=1-O((\log n)^{2/3} n^{-1/3})
$$ 
there is no edge from $v$ to $F$.
\end{itemize}

\bigskip \noindent In particular, it follows that a.a.s.\ the following properties hold:
\begin{itemize}
\item[(iii)] for all $j=1,2,\ldots,i/2 - 1$,
$$
|N_{V \setminus F}(v,j)| = |S_{V \setminus F}(v,j)| (1+O(1/d)) = d^j(1+o(\log^{-1} n)),
$$ 
\item [(iv)] for all $j=1,2,\ldots,i/2 - 2$, every vertex of $S_{V \setminus F}(v,j)$ has $d(1+o(\log \log n / \log^{2} n))$ neighbours in $S_{V \setminus F}(v, j+1)$,  
\item [(v)] the graph induced by $N_{V \setminus F}(v, i/2 -2)$ forms a tree,
\item [(vi)] all vertices of $S_{V \setminus F}(v,i/2-1)$ have exactly one neighbour in $S_{V \setminus F}(v,i/2-2)$,
\item [(vii)] for any fixed partition of the neighbours of $v$ into two sets, $V_L$ and $V_R$, such that $||V_L| - |V_R|| \le 1$, let $S_L$ denote the set of vertices of $S(v,i/2-1)$ that are at distance $i/2-2$ from $V_L$, and let $S_R=S(v,i/2-1) \setminus S_L$; then,
$$
|S_L| = |S_R| (1+o(\log^{-1} n))  = \frac {d^{i/2-1}}{2} (1+o(\log^{-1} n)).
$$
\end{itemize}
\end{lemma}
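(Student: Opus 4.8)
The plan is to prove all seven parts by a standard breadth-first-search exploration argument, run simultaneously from $v$ in the graph induced by $V\setminus F$, conditioning on as little as possible at each step and using concentration inequalities to keep the neighbourhood sizes close to their expectations. For part (i), I would reveal the edges from $v$ to $V\setminus(F\cup\{v\})$: the number of such neighbours is $\mathrm{Bin}(n-1-|F|,p)$, which has mean $d(1+O(|F|/n)) = d(1+o(1/\log n))$ since $|F| = O(\sqrt{n\log n/d}) = o(n/\log n)$ by the upper bound on $d$; then a Chernoff bound gives the claimed $(1+o(\log\log n/\log^2 n))$-relative error with probability $1-o(n^{-1})$, using that $d \gg \log^5 n/(\log\log n)^2$ so that the error term in the exponent, of order $d\cdot(\log\log n/\log^2 n)^2 = d(\log\log n)^2/\log^4 n \gg \log n$, dominates $\log n$. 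For part (ii), each of the $|F|$ potential edges from $v$ to $F$ is present independently with probability $p \le 1$, so the probability that at least one is present is at most $|F| p \le |F| = O(\sqrt{d\log n/n})$, giving the stated bound directly; note $d^{i/2} = d\cdot d^{i/2-1} = O(d\sqrt{n\log n/d}) = O(\sqrt{dn\log n})$, so $d^{i/2}/n = O(\sqrt{d\log n/n})$, consistent with the displayed equalities.

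For the ``in particular'' parts, I would build the BFS layers $S_{V\setminus F}(v,j)$ one at a time for $j = 1,\dots, i/2-1$. Inductively suppose $|S_{V\setminus F}(v,j)| = d^j(1+o(\log^{-1}n))$ and that the explored part $N_{V\setminus F}(v,j)$ has size $O(d^{i/2-1})$ (this holds since $d^j \le d^{i/2-1}$ throughout the range and geometric summation is dominated by its last term). To reveal layer $j+1$: the number of unexplored vertices outside $F$ is $n - O(d^{i/2-1}) - |F| = n(1-o(1))$, and each such vertex joins $S_{V\setminus F}(v,j+1)$ independently (given the revealed edges so far) with probability $1-(1-p)^{|S_{V\setminus F}(v,j)|} = |S_{V\setminus F}(v,j)| p (1+O(|S_{V\setminus F}(v,j)| p)) = d^{j+1}/n\cdot(1+O(d^{i/2-1}/n))$; multiplying by $n(1-o(1))$ gives mean $d^{j+1}(1+o(\log^{-1}n))$, and since $d^{j+1} \ge d^2 \gg \log^{10}n$ Chernoff gives relative error $o(\log^{-1}n)$ with probability $1-o(n^{-1})$, so we may union-bound over all $j$ and (for parts (iv)--(vii)) over all the vertices involved. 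The relation $|N_{V\setminus F}(v,j)| = |S_{V\setminus F}(v,j)|(1+O(1/d))$ in (iii) then follows because $\sum_{k<j} d^k = d^j\cdot O(1/d)$. Part (iv) is the same computation applied with $v$ replaced by an individual vertex $w \in S_{V\setminus F}(v,j)$: the number of neighbours of $w$ landing in $S_{V\setminus F}(v,j+1)$ is $\mathrm{Bin}(n(1-o(1)) - |S_{V\setminus F}(v,j+1)|_{\text{already forced}}, p)$-like with mean $d(1+o(\log\log n/\log^2 n))$, using $d \gg \log^5 n/(\log\log n)^2$ for the sharper error; the union bound over the $O(d^{i/2-2})$ such vertices costs a factor that $o(n^{-1})$ absorbs. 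For part (v), the induced graph on $N_{V\setminus F}(v,i/2-2)$ is a tree iff no edge has both endpoints in this set other than the BFS tree edges; the expected number of ``extra'' edges is $O(|N_{V\setminus F}(v,i/2-2)|^2 p) = O(d^{i-4}\cdot d/n) = O(d^{i-3}/n)$, and since $i$ is the largest even integer with $d^{i-1} \le \frac{1}{16}n\log n$ we have $d^{i-3} = d^{i-1}/d^2 \le \frac{n\log n}{16 d^2} = o(n)$ because $d^2 \gg \log^{10}n \gg \log n$, so a.a.s.\ there are none. Part (vi) says each $w \in S_{V\setminus F}(v,i/2-1)$ has a unique neighbour in the previous layer; the probability $w$ has two or more such neighbours is $O((d^{i/2-2})^2 p) = O(d^{i-4}/n)$ per vertex, and summing over the $O(d^{i/2-1})$ vertices of layer $i/2-1$ gives $O(d^{3i/2-5}/n)$, which is $o(1)$ since $3i/2-5 < i-1$ for $i\ge 4$ and $d^{i-1}\le\frac1{16}n\log n$ forces $d^{3i/2-5} = o(n)$ when... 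I would double check this exponent bound, but it reduces to $d^{3i/2-5} \ll n$, equivalently (using $d^{i-1}\asymp$ at most $n\log n$) to $d^{i/2-4}\ll d^{i-1}/d^{3i/2-5}\cdot$(junk)---cleaner is: $3i/2-5 \le i-1 \iff i \le 8$, so for $i\le 8$ it is immediate and for larger $i$ one uses $d \gg \log^5 n$ more carefully; in any case this is a routine exponent bookkeeping.

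For part (vii), fix the balanced partition $V_L, V_R$ of $N_{V\setminus F}(v,1)$. By (v) the ball of radius $i/2-2$ is a tree, so every vertex of $S_{V\setminus F}(v,i/2-1)$ has, by (vi), a unique neighbour in layer $i/2-2$, hence lies in a unique subtree rooted at a single neighbour of $v$; thus $S_L$ and $S_R$ are genuinely a partition, and $S_L$ is exactly the union of the ``descendants at depth $i/2-1$'' of the vertices in $V_L$. Run the BFS separately from $V_L$ and from $V_R$ (these explorations are on disjoint vertex sets by the tree property, so independent): by the same layer-growth estimate, starting from $|V_L| = d/2(1+o(1))$ vertices, after $i/2-2$ further levels one reaches $\frac{d}{2}\cdot d^{i/2-2}(1+o(\log^{-1}n)) = \frac{d^{i/2-1}}{2}(1+o(\log^{-1}n))$ vertices, and symmetrically for $V_R$; the error terms compound but there are only $i/2-1 = O(\log n/\log\log n)$ of them each of size $o(\log^{-1}n)$, so their sum is still $o(\log^{-1}n/\log\log n)\cdot O(\log n/\log\log n)$---actually I should be slightly careful and use that after $k$ levels the accumulated relative error is $O(k/\text{(per-level error)}^{-1})$; since each per-level error is $o(\log^{-1}n)$ and there are $O(\log n/\log\log n)$ levels the total is $o(1/\log\log n) = o(1)$, and one can sharpen to $o(\log^{-1}n)$ by using the stronger per-level concentration available because at the deepest levels the layer sizes are polynomially large in $n$. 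The main obstacle throughout is purely bookkeeping: keeping track of exactly how much has been revealed so that the conditional edge-probabilities are genuinely close to $p$, and propagating the relative errors through $\Theta(\log n/\log\log n)$ levels of recursion while keeping the cumulative error $o(\log^{-1}n)$ as claimed---this is why the hypothesis $d \gg \log^5 n/(\log\log n)^2$ (rather than merely $d\gg\log n$) is needed, since it makes each per-level Chernoff error term beat not just $\log n$ but a sufficient power of it to survive both the union bound over $\mathrm{poly}(n)$ vertices and the compounding over all levels.
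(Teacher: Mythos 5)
Your overall strategy---BFS exploration from $v$ in $V\setminus F$, per-layer Chernoff concentration, union bounds over the explored vertices, and cumulative relative error controlled over the $O(\log n/\log\log n)$ levels---is the same as the paper's; the paper processes vertices one at a time (which delivers (iii) and (iv) simultaneously from part (i)) where you reveal whole layers at once, but that is cosmetic, and your first-moment count of extra edges for part (v) is a legitimate alternative to the paper's use of part (ii). However, your argument for part (vi) fails as written. You bound the probability that a vertex of $S_{V\setminus F}(v,i/2-1)$ has two or more neighbours in $S_{V\setminus F}(v,i/2-2)$ by $O((d^{i/2-2})^2p)$ and multiply by the $O(d^{i/2-1})$ vertices of the last sphere, arriving at $O(d^{3i/2-5}/n)$---and, as you half-suspect, this is \emph{not} $o(1)$ in general: since $i$ is the largest admissible even integer, $d^{i-1}>\tfrac1{16}n\log n/d^{2}$, so
$$
\frac{d^{3i/2-5}}{n}=\frac{(d^{i-1})^{3/2}}{n\,d^{7/2}}>\frac{(n\log n)^{3/2}}{64\,n\,d^{13/2}}=\frac{n^{1/2}(\log n)^{3/2}}{64\,d^{13/2}},
$$
which tends to infinity when $d$ is polylogarithmic (e.g.\ $d\asymp\log^{5}n$, which the hypotheses allow). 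The source of the error is a missing factor of $p$ combined with summing over the wrong set: the unconditional probability that a fixed vertex outside $N_{V\setminus F}(v,i/2-2)\cup F$ has at least two neighbours in $S_{V\setminus F}(v,i/2-2)$ is at most $\binom{|S_{V\setminus F}(v,i/2-2)|}{2}p^{2}=O(d^{i-2}/n^{2})$, and summing this over all $n$ candidate vertices (rather than restricting to $S(v,i/2-1)$, which already conditions on one edge) gives $O(d^{i-2}/n)=O(d^{i-1}/(dn))=O(\log n/d)=o(1)$, as required. (The paper instead gets (v) and (vi) together from part (ii): each of the $O(d^{i/2-2})$ vertices processed by the BFS sends an edge back to the $O(d^{i/2-1})$ already-discovered vertices with probability $O(d^{i/2}/n)$, and $d^{i/2-2}\cdot d^{i/2}/n=O(\log n/d)=o(1)$.)

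Two smaller slips, neither fatal. In part (ii) the chain ``$|F|p\le|F|=O(\sqrt{d\log n/n})$'' is wrong, since $|F|=O(\sqrt{n\log n/d})$ is polynomially large; what you want is $|F|p=O(d^{i/2-1}\cdot d/n)=O(d^{i/2}/n)$, which does equal $O(\sqrt{d\log n/n})$. In part (vii) your account of how the per-level errors accumulate is garbled, though the conclusion is right: each level contributes a multiplicative $1+o(\log\log n/\log^{2}n)$, so over $O(\log n/\log\log n)$ levels the product is $1+o(\log^{-1}n)$; once (iv), (v) and (vi) are in place, the count of descendants of $V_L$ at depth $i/2-1$ is deterministic, which is exactly how the paper phrases it.
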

\begin{proof}
Let $F \subseteq V$, $f=|F| = O(d^{i/2-1})$, and $v \in V \setminus F$. Consider the random variable $X = X(F,v) = |S_{V \setminus F}(v,1)|$.  We will bound $X$ in a stochastic sense. There are two things that need to be estimated: the expected value of $X$, and the concentration of $X$ around its expectation. Since $X \in \textrm{Bin}(n-f-1,p)$, it is clear that
$$
\E{X} = \frac {d}{n-1} (n-f-1) = d \left( 1 + O(f/n) \right) = d (1+O(n^{-1/2})).
$$
A consequence of Chernoff's bound (see e.g.~\cite[Corollary~2.3]{JLR}) is that 
\begin{equation}\label{eq:chern}
\Prob \Big( |X-\E X| \ge \eps \E X \Big) \le 2\exp \left( - \frac {\eps^2 \E X}{3} \right)  
\end{equation}
for  $0 < \eps < 3/2$. Hence, after taking $\eps = 2 \sqrt{\log n / d}$, we get that with probability $1+o(n^{-1})$ we have 
$$
X = \E{X} (1+O(\eps)) = d (1+O(n^{-1/2})) (1+O(\eps)) = d \left( 1 + o \left( \frac {\log \log n}{\log^2 n} \right) \right).
$$
This proves part (i) of the lemma. 
Part (ii) is straightforward since the probability that there is no edge from $v$ to $F$ is equal to 
$$
(1-p)^f = \exp(-(1+o(1))p f) = \exp \left( - O \left( \frac {d^{i/2}}{n}  \right) \right) = 1 - O \left( \frac {d^{i/2}}{n} \right).
$$

Part (iii) is a straightforward implication of (i). In order to have good bounds on the ratios of the cardinalities of $N(v,1)$, $N(v,2)$, and so on, we consider the Breadth First Search (BFS) algorithm that explores vertices one by one (instead of the whole $j$-th neighbourhood). Formally, the process is initiated by putting $v$ into the queue $Q$. In each step of the algorithm, one vertex $w$ is taken from $Q$ and edges from $w$ to all vertices that are not in $F$ and have not yet been discovered are examined. All new neighbours of $w$ that are found are put into the queue $Q$. The process continues until the queue $Q$ is empty or vertices of $N_{V \setminus F}(v,i/2-1)$ are discovered. (Note that if the process stops because $N_{V \setminus F}(v,i/2-1)$ is discovered, vertices of $S_{V \setminus F}(v,i/2-1)$ are in the queue $Q$; that is, no vertex from this sphere is processed and, in particular, edges in the graph induced by $S_{V \setminus F}(v,i/2-1)$ are not exposed yet.)

Suppose that $N_{V \setminus F}(v,j-1)$ is discovered and we continue investigating vertices of the sphere $S_{V \setminus F}(v,j-1)$, one by one, that are in the queue $Q$. Provided $O(d^{i/2-1})$ vertices have been discovered so far, it follows from part (i) that we may assume that when each vertex of $S_{V \setminus F}(v,j-1)$ is processed, we discover $d (1+o(\log \log n / \log^2 n))$ new neighbours that belong to $S_{V \setminus F}(v,j)$. After that we update $F$ by adding all newly discovered vertices to it, adding the vertex processed at this step, and removing the next vertex to be processed---see Figure~\ref{PictureF2}.
\begin{figure}
  \includegraphics[width=0.4\textwidth]{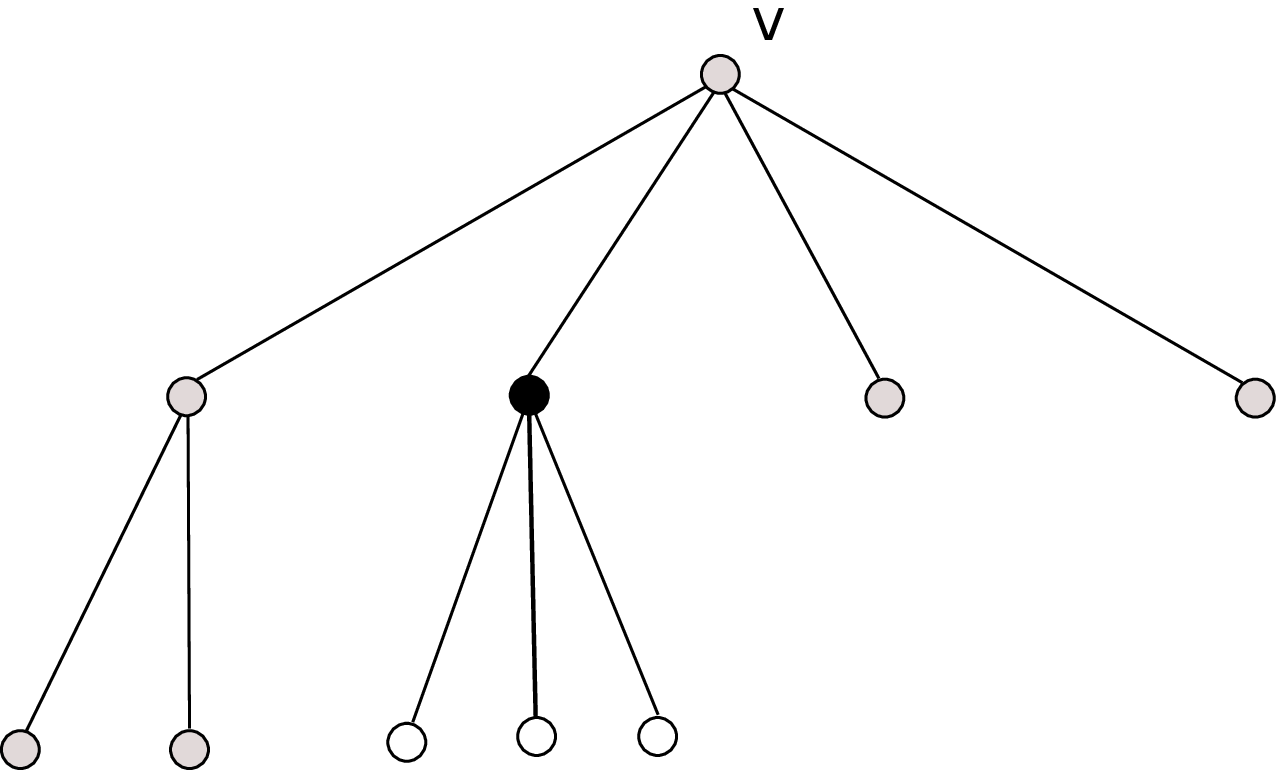}
  \includegraphics[width=0.4\textwidth]{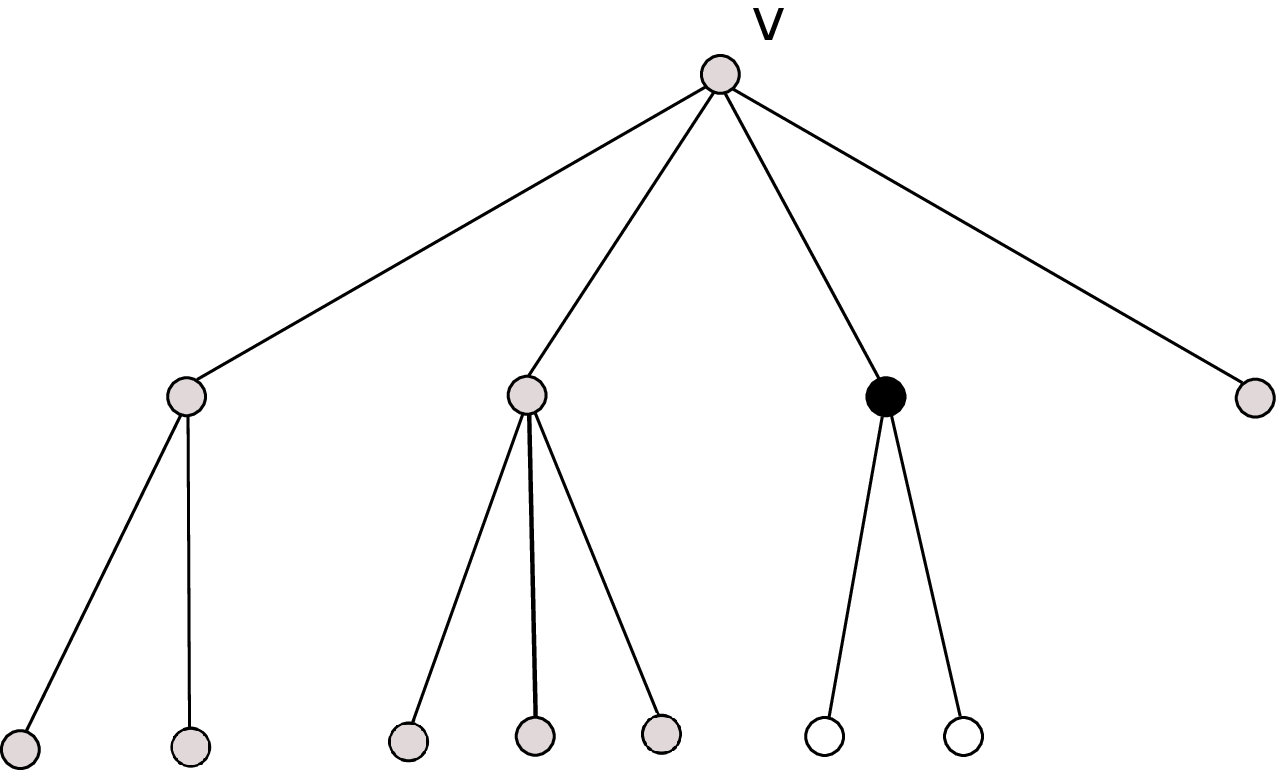}
  \caption{Two consecutive steeps of BFS started from vertex $v$. The black vertex is the vertex currently exposed. Grey vertices form the set $F$ that is updated each time. White vertices are newly discovered ones.}\label{PictureF2}
\end{figure}
We continue until $N_{V \setminus F}(v,j)$ is discovered to get that 
$$
|S_{V \setminus F}(v,j)| = |S_{V \setminus F}(v,j-1)| d (1+o(\log \log n / \log^2 n)). 
$$
We consider this up to the $j$'th iterated neighbourhood, where $j=i/2-1$ and $d^{i-1} \le n \log n / 16$ and thus  $j = O(\log n /\log \log n)$. Then the cumulative multiplicative error term  is 
$$
\left( 1 + o \left( \frac {\log \log n}{\log^2 n} \right) \right)^j = (1 + o(\log^{-1} n)),
$$
and thus $|S(v,j)|=d^j(1 + o(\log^{-1} n))$, and 
$$
|N(v,j)|=\sum_{i=1}^j |S(v,i)|=|S(v,j)|(1+O(1/d)).
$$
This establishes (iii).

For parts (iv), (v), and (vi) we note that in each step of the BFS algorithm the probability that there is no edge from $w$ (the vertex that is processed at this point) to vertices that have been already discovered is, by part (ii), $1-O(d^{i/2}/n)$. 
Hence, by the union bound, a.a.s.\ this never happens since the number of vertices processed is 
\begin{eqnarray*}
|N_{V \setminus F}(v,i/2-2)| &=& (1+o(1)) d^{(i-1)/2-3/2} = O(d^{-3/2} \sqrt{n \log n}) \\
&=& O(d^{-3/2} (n \log n) / d^{(i-1)/2}) = O(d^{-1} (n \log n) / d^{i/2}) \\
&=& o(n/d^{i/2}).
\end{eqnarray*}
The claim follows.

Part (vii) follows immediately (and deterministically) from (iv), (v), and (vi). The proof of the lemma is finished.
\end{proof}

\bigskip

We first give the proof of the result for very dense graphs.

\begin{proof}[Proof of Theorem~\ref{thm:main}(iv)-(v).]
For $p =1-o(1/n^2)$, note that the expected number of edges in the complement of $G$ is ${n \choose 2} (1-p) = o(1)$, and thus by Markov's inequality, a.a.s., $G$ is the complete graph on $n$ vertices. If this is the case, then $d(u,v)=1$ for any pair of vertices $u$ and $v$, and thus for any four vertices $u,v,x,y$, clearly, $d(u,v)+d(x,y)-(d(u,x)+d(v,y))=0$, and hence $\delta_H(G)=0$. Part (v) is proved.	

For $p=1-2c/n^2$ and $c > 0$, note that a.a.s.\ there is no component of size $3$ or more in the complement of $G$. Thus, a.a.s., for all four-tuples of vertices in the original graph, either all edges are present, only one edge is missing, or two disjoint edges are missing. In all of these cases, the non-adjacent vertices are at distance $2$, and thus a.a.s.\ $\delta_H(G) \leq 1$.
The expected number of edges in the complement of $G$ equals ${n \choose 2}(1-p) = (1+o(1)) c$. Also, for any fixed $r$, the $r$-th moment of the number of edges in the complement of $G$ equals $c^r(1+o(1))$, and thus, by the method of moments (see, for example, Theorem~1.22 of~\cite{bol}) the number of edges converges to a random variable with a Poisson distribution with parameter $c$. In particular, with probability $(1+o(1))e^{-c}$, the complement of $G$ is empty, and by the argument in the first case, we have $\delta_H(G)=0$. Also, with probability $(1+o(1))ce^{-c}$, the complement of $G$ contains exactly one edge, say $\{u,v\}$. For the four-tuples not containing both $u$ and $v$, the analysis is as before. For a four-tuple $u,v,x,y$ we now have
for the distances in the original graph $d(u,v)+d(x,y)=3$, $d(u,x)+d(v,y)=d(u,y)+d(v,x)=2$, and thus $\delta_H(G)=1/2$. Finally, with probability $(1+o(1))\left(1-(c+1)e^{-c}\right)$ the complement of $G$ has at least two disjoint edges, say $\{u,v\}$ and $\{x,y\}$. In this case, in the original graph we have $d(u,v)+d(x,y)=4$, $d(u,x)+d(v,y)=d(u,y)+d(v,x)=2$, and thus $\delta_H(G) \geq 1$, and part (iv) follows.
\end{proof}

The main challenge of this paper is to prove the following result and the whole next section is dedicated to it. Here, we show how Theorem~\ref{thm:main}(i)-(iii) can be derived from it.

\begin{theorem}\label{thm:main2}
Suppose that 
$$
d=d(n)=p(n-1) \gg \frac {\log^5 n}{(\log \log n)^2} \textrm{ \ \ \ and \ \ \  } p = 1 - \omega(1/n^2).
$$  
Let $i \geq 2$ be the largest even integer such that $d^{i-1} \leq \frac{1}{16} n \log n.$ 
Let $G \in \mathcal{G}(n,p)$. Then, a.a.s., $\delta_H(G) \geq i/2$. 
\end{theorem}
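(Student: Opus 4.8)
The plan is to exhibit, a.a.s., four vertices whose three pairwise–distance sums are $2i$, $i$, $i$: the two largest then differ by $i=2\cdot(i/2)$, so $\delta_H(G)\ge i/2$ by definition. A convenient target is a cycle $C=(c_0,c_1,\dots,c_{2i-1})$ of length $2i$ for which the two antipodal pairs are as far apart as $C$ permits,
$$
d_G(c_0,c_i)=d_G(c_{i/2},c_{3i/2})=i .
$$
Indeed, along $C$ one has $d_G(c_0,c_{i/2}),d_G(c_{i/2},c_i)\le i/2$, so $d_G(c_0,c_i)=i$ forces $d_G(c_0,c_{i/2})=d_G(c_{i/2},c_i)=i/2$, and likewise all four ``quarter'' distances equal $i/2$; hence the quadruple $\{c_0,c_{i/2},c_i,c_{3i/2}\}$ realises the sums $2i$, $i$, $i$. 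Equivalently, it suffices to find vertices $v,w$ with $d_G(v,w)=i$ joined by two internally disjoint geodesics whose midpoints $x$ and $y$ satisfy $d_G(x,y)=i$: the cycle is then the union of the two geodesics, with $v=c_0$, $w=c_i$, $x=c_{i/2}$, $y=c_{3i/2}$. The borderline case $i=2$ (where the diameter is only $2$, $3$ or $4$) is treated separately at the end.

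For the main range $i\ge 4$ — which, under the hypotheses, is exactly the range $d\le(\tfrac1{16}n\log n)^{1/3}$ in which Lemma~\ref{lem:expansion} is available — I would use a staged exposure of the randomness. First fix an arbitrary $v$ and run the BFS of Lemma~\ref{lem:expansion} from $v$ to depth $i/2-1$; by that lemma (and a union bound over the $n$ choices of $v$) a.a.s.\ $N(v,i/2-1)$ is a tree, $|S(v,i/2-1)|=(1+o(1))d^{\,i/2-1}$, and for any balanced bipartition $N(v,1)=V_L\sqcup V_R$ the boundary sphere splits as $S(v,i/2-1)=S_L\sqcup S_R$ with $|S_L|,|S_R|=(1+o(1))d^{\,i/2-1}/2$. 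Next pick a ``far'' partner $w$: since $d^{\,i-1}\le\tfrac1{16}n\log n$, the ball $N(v,i-1)$ cannot exhaust $V$ — the expected number of vertices at distance at least $i$ from $v$ is at least $n\exp(-(1+o(1))d^{\,i-1}/n)\ge n^{15/16}$ — so a.a.s.\ many vertices $w$ satisfy $d_G(v,w)=i$, and we take one whose own depth-$(i/2-1)$ BFS tree is again ``nice'' in the sense of Lemma~\ref{lem:expansion} applied with $F=N(v,i/2-1)$ (legitimate since $|F|=O(d^{\,i/2-1})$). Because $d_G(v,w)=i>2(i/2-1)$ there is no edge between $N(v,i/2-1)$ and $N(w,i/2-1)$, and $d_G(p,q)\ge 2$ for every $p\in S(v,i/2-1)$, $q\in S(w,i/2-1)$. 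Finally expose the edges from $S(v,i/2-1)\cup S(w,i/2-1)$ to the remaining $(1-o(1))n$ vertices; since in this range a.a.s.\ there are $\gg\log n$ vertices adjacent both to $S_L$ and to $S(w,i/2-1)$ (and likewise with $S_R$), and only polylogarithmically many need be avoided, one can choose a vertex $x$ adjacent to some $\alpha\in S_L$ and to some $\gamma\in S(w,i/2-1)$, and $y\ne x$ adjacent to some $\alpha'\in S_R$ and to some $\gamma'\in S(w,i/2-1)$ with $\gamma,\gamma'$ in different branches of the $w$-tree, all of $x,y,\alpha,\alpha',\gamma,\gamma'$ lying outside a small ``bad'' set specified below. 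Since $d_G(v,w)=i$ and $x$ touches both $S(v,i/2-1)$ and $S(w,i/2-1)$, the triangle inequality forces $d_G(v,x)=d_G(w,x)=i/2$, and likewise $d_G(v,y)=d_G(w,y)=i/2$. Concatenating the path from $v$ through $V_L$ to $\alpha$, then to $x$, then to $\gamma$, then through the $w$-tree to $w$, with the symmetric path through $V_R$, $\alpha'$, $y$, $\gamma'$, gives two internally disjoint $v$–$w$ paths of length $i$ with midpoints $x$ and $y$.

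It remains — and this is the crux — to rule out shortcuts: to show that the two $v$–$w$ paths just built are geodesics and, above all, that $d_G(x,y)=i$ rather than something smaller. Any $v$–$w$ walk of length at most $i-1$, and any $x$–$y$ walk of length at most $i-1$ other than the two routes around the cycle, must take a short detour either \emph{inside} one of the spheres (two vertices of $S(v,i/2-1)$, or of $S(w,i/2-1)$, at mutual $G$-distance $<i-2$) or through the still-unexposed part of the graph. Detours of the first kind are excluded for the finitely many sphere vertices actually used by our two bridges because the $N(v,i/2-1)$- and $N(w,i/2-1)$-trees are genuine trees (parts (v) and (vi) of Lemma~\ref{lem:expansion}) and because, with $d^{\,i-3}/n=o(1)$, a fixed pair of sphere vertices a.a.s.\ lies on no short cycle; this is what dictates the ``bad'' set to be avoided when choosing $\alpha,\alpha',\gamma,\gamma'$. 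Detours of the second kind are killed by a first–moment bound on short paths joining the $O(\log n)$ ``special'' vertices of the configuration. Both estimates spend the slack hidden in the constant $\tfrac1{16}$, and the reason this step is delicate is precisely that the graph is dense enough that \emph{typical} pairs of vertices are strictly closer than $i$: one genuinely must work with a far endpoint $w$ and with two genuinely deep, disjoint branches for $x$ and $y$, since choosing vertices at random would fail. In the denser sub-range (case (ii) of Theorem~\ref{thm:main}, where the diameter exceeds $i$ and length-$i$ geodesics are comparatively scarce) the same scheme has to be implemented more carefully, exploiting the freedom in the choice of the pair $(v,w)$ among the many pairs at distance $i$ — which is also why the method yields only $\delta_H(G)\ge i/2$ there rather than the matching upper bound.

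For the remaining case $i=2$ we have $d>(\tfrac1{16}n\log n)^{1/3}$, so that $\tfrac{\log^5 n}{(\log\log n)^2}\ll d\le\tfrac1{16}n\log n$ and the diameter is $2$, $3$ or $4$; in each case one exhibits a.a.s.\ an explicit bounded configuration forcing $\delta_H(G)\ge 1$. When the diameter is $2$ it suffices to find an induced $4$-cycle on vertices $u,x,v,y$ (in this cyclic order): then $d_G(u,v)=d_G(x,y)=2$ while the four remaining distances equal $1$, so the three sums are $4$, $2$, $2$. The expected number of such cycles is $\Theta(n^4 p^4(1-p)^2)$, which tends to infinity precisely because $1-p=\omega(1/n^2)$, and a second–moment computation yields a.a.s.\ existence; for diameters $3$ and $4$ an analogous configuration on a bounded number of vertices does the job. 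Assembling the two ranges proves Theorem~\ref{thm:main2}.
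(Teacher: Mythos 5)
Your target configuration is exactly the one the paper uses (two vertices at distance $i$ joined by two disjoint geodesics whose midpoints are also at distance $i$, giving distance sums $2i,i,i$), and your treatment of the boundary case $i=2$ via an induced $C_4$ and the second moment method coincides with the paper's. However, the construction you propose for the main case $i\ge 4$ has a genuine gap, and it is located precisely at the point you yourself call the crux. First, the step ``fix an arbitrary $v$ \dots\ a.a.s.\ $N(v,i/2-1)$ is a tree'' is false in the upper part of the admissible range: Lemma~\ref{lem:expansion}(v) only guarantees a tree up to depth $i/2-2$, and the expected number of edges inside the last sphere $S(v,i/2-1)$ is of order $d^{i-2}p\sim d^{i-1}/n$, which can be as large as $\tfrac1{16}\log n$. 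So once $d^{i-1}/n\to\infty$, for a fixed (indeed for every) $v$ the sphere $S(v,i/2-1)$ a.a.s.\ \emph{does} contain edges; the probability that it is edge-free is only $q=\exp(-d^{i-1}/(2n))$, which can be as small as $n^{-1/32}$. Second, and more fatally, the shortcut-exclusion step cannot be carried out by a first-moment bound: the expected number of paths of length at most $i-1$ between your chosen $x$ and $y$ that avoid the exposed structure is again of order $d^{i-1}/n$, which tends to infinity in the same regime, so such shortcuts are typically \emph{present} and your constructed pair a.a.s.\ satisfies $d(x,y)\le i-1$. The same objection applies to keeping $d(v,w)=i$: a fixed pair of vertices is at distance at least $i$ only with probability about $q^{2}=o(1)$. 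A first-moment bound can only show that the expected number of bad events is small; here it is large, so the argument collapses exactly where the constant $\tfrac1{16}$ was supposed to save you.

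The paper's resolution is structurally different from what you sketch: it accepts that \emph{every individual} four-tuple certifies the configuration only with the polynomially small probability $\left(\frac{d^{i/2}}{2n}\right)^{4}q^{16}(1+o(1))$ (the factor $q^{16}$ collecting all the tree and no-cross-edge requirements you are implicitly assuming for free), verifies that the expected number of certifying four-tuples is still $\Omega(n^{5/6}(\log n)^{4/3})\to\infty$ (Lemma~\ref{lem:firstmoment}), and then runs a full second-moment/Chebyshev argument over all $\binom{n}{4}$ four-tuples, with an eight-case analysis of the correlations between overlapping configurations (Lemma~\ref{lem:secondmoment}). Your remark that one must ``exploit the freedom in the choice of the pair $(v,w)$'' points in the right direction, but it is never made quantitative: to repair the proof you would have to count candidate configurations globally and control the second moment, which is essentially the paper's argument and is where all the real work lies.
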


\begin{proof}[Proof of Theorem~\ref{thm:main}(i)-(iii)]
Fix $j$ to be the smallest integer such that $d^j/n- 2\log n \to \infty$. In particular, $d^{j+1}/n = \omega(\log n)$. Moreover, it follows from Corollary~\ref{cor:diameter} that the diameter of $G$ is at most $j$ a.a.s. Hence, by Lemma~\ref{lem:upperbound}, a.a.s.\ $\delta_H(G) \leq j/2$. This establishes upper bounds in parts (i) and (ii).

Suppose first that $j$ is even and that $d^{j-1} \leq \frac{1}{16} n \log n$. Then $j$ is the largest even integer such that  $d^{j-1} \leq \frac{1}{16} n \log n.$ By Theorem~\ref{thm:main2}, a.a.s.\ $\delta_H(G) \geq j/2$ and part (i) holds.

Suppose next that $j$ is even and that $d^{j-1} > \frac{1}{16} n \log n$ (note that it follows from the definition of $j$ that $d^{j-1} \leq (2+o(1))n \log n$). Then $j-2$ is the largest even integer such that $d^{j-3} \leq \frac{1}{16} n \log n$, and by Theorem~\ref{thm:main2}, a.a.s.\ $\delta_H(G) \geq j/2 -1$. This finishes part (ii).

Finally, suppose that $j$ is odd. Since $d^{j-1}/n = O(\log n)$, $d^{j-2}/n = o(\log n)$, and thus $j-1$ is the largest even integer such that $d^{j-2}\leq \frac{1}{16}n \log n.$ By Theorem~\ref{thm:main2}, a.a.s., $\delta_H(G) \geq (j-1)/2$. Since a.a.s.\ the diameter of $G$ is at most $j$, and $j$ is odd, by Lemma~\ref{lem:upperbound2} we have that a.a.s.\ $\delta_H(G) \leq (j-1)/2$. Part (iii) and so the whole proof is finished.
\end{proof}

\section{Proof of Theorem~\ref{thm:main2}}
Let $G = (V, E) \in \mathcal{G}(n,p)$ and suppose that $d=p(n-1) \gg \frac {\log^5 n}{(\log \log n)^2}$ and $p = 1 - \omega(1/n^2)$.  Let $i \geq 2$ be the largest even integer such that $d^{i-1} \leq \frac{1}{16} n \log n.$ Assume first that $d > (\frac{1}{16}n\log n)^{1/3}$ which implies that $i=2$. In this case, we have to prove that a.a.s.\ $\delta_H(G) \geq 1$. It therefore suffices to find four vertices $u,v,x,y$ such that the subgraph induced by them is a $4$-cycle. Since $p > n^{-2/3}(\frac{1}{16}\log n)^{1/3}$ and $1-p = \omega(n^{-2})$, the expected number of induced cycles of length 4 is ${n \choose 4} p^4 (1-p)^2 \to \infty$. It is a straightforward application of the second moment method to show that a.a.s.\ there is at least one induced cycle in $G$ and the statement follows in this case.

Hence, from now on we may assume that
\begin{equation}\label{eq:deg_bound}
d \leq \left(\frac{1}{16}n\log n\right)^{1/3},
\end{equation}
which implies that $i \geq 4$. We need one more definition: for a given $u \in V$, $k \geq 1$, and $A \subseteq V$, we say that $N_{V \setminus A} (u,k)$ \textbf{expands well} if for all $j=1,2,\ldots, k$, $$|N_{V \setminus A} (u,j)| =|S_{V \setminus A}(u,j)|(1+O(1/d))=d^{j}(1+o(\log^{-1} n))$$ and for all $j=1,2,\ldots, k-1$, every vertex of $S_{V \setminus A}(u,j)$ has $d(1+o(\log \log n / \log^{2} n))$ neighbours in $S_{V \setminus A}(u, j+1)$.  Finally, fix a four-tuple of different vertices $u,v,x,y$ and consider the following process (see Figure~\ref{FigHyp1}): \\
\begin{figure}
  \includegraphics[width=.4\textwidth]{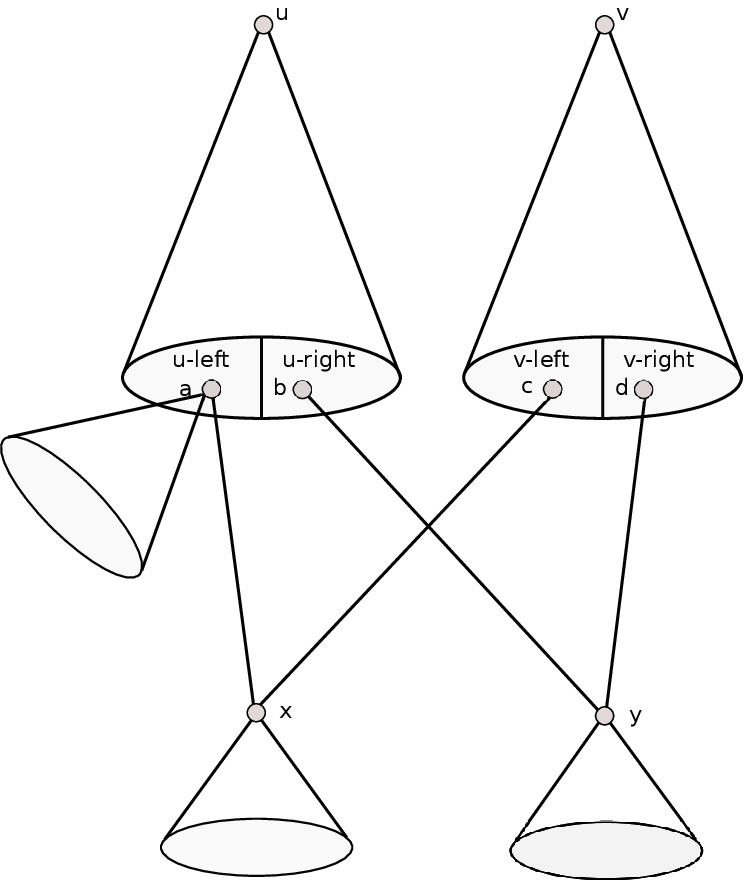} 
  \includegraphics[width=.4\textwidth]{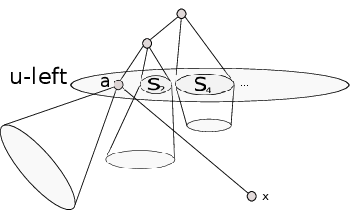} 
\caption{ left: \textsc{Hyperbolicity($u,v,x,y$)}, the big picture; right: the neighbourhood exposure around $a$ in more detail}\label{FigHyp1}
\end{figure}
 \textsc{Hyperbolicity($u,v,x,y$)}
 \begin{enumerate}
 \item Let $B:=\{v,x,y\}$. Perform Breadth-First Search (BFS) from $u$ in the graph induced by $V \setminus B$ to expose $N_{V \setminus B}(u,i/2 - 1)$. Make sure that the following properties hold (otherwise stop the process): 
   \begin{description}
   \item [(a)] $N_{V \setminus B}(u,i/2 - 1)$ expands well. 
   \item [(b)] The graph induced by $N_{V \setminus B}(u,i/2-1)$ is a tree.
   \item [(c)] There is no edge from $N_{V \setminus B}(u,i/2-2)$ to $\{v,x,y\}$.
      \end{description} 
 (As a result, $N(u,i/2-1)=N_{V \setminus B}(u,i/2-1)$ and so $N(u,i/2-1)$ expands well, $N(u,i/2-1)$ is a tree, and $\{v,x,y\} \cap N(u,i/2-1) = \emptyset$.)
 \item Let $D:=N(u,i/2 - 1) \cup \{x,y\}$. Perform BFS from $v$ in the graph induced by $V \setminus D$ to expose $N_{V \setminus D}(v,i/2-1)$. (The reason that here we restrict ourselves to the induced graph is to make sure no edge in this graph is already exposed and so, as typical, we perform BFS by exposing edges one by one, as required.) Make sure that the following properties hold (otherwise stop):
   \begin{description}
      \item [(d)]  $N_{V \setminus D}(v,i/2-1)$ expands well. 
      \item [(e)] There is no edge from $N_{V \setminus D}(v,i/2-1)$ to $S(u,i/2-1)$ (note that edges from vertices of $N(u,i/2-2)$ are already exposed, so that the only chance for the intersection of $N_{V \setminus D}(v,i/2-1)$ and $N(u,i/2-1)$ to be non-empty is when we reach vertices of $S(u,i/2-1)$). 
      \item [(f)] The graph induced by $N_{V \setminus D}(v,i/2-1)$ is a tree. 
      \item [(g)] There is no edge from $N_{V \setminus D}(v, i/2-2)$ to $\{x,y\}$. 
   \end{description}  
 (As a result, $N(v,i/2-1) = N_{V \setminus D}(v,i/2-1)$ and $N(v,i/2-1)$ so expands well, $N(v,i/2-1) \cap N(u,i/2-1) = \emptyset$, $N(v,i/2-1)$ is a tree, and $\{x,y\} \cap N(v,i/2-1) = \emptyset$.)
 \item Let us partition (arbitrarily) the neighbours of $u$ into two sets, $U_L$ and $U_R$, such that $||U_L| - |U_R|| \le 1$. Let us partition the vertices of $S(u,i/2-1)$ and call vertices that are at distance $i/2-2$ from $U_L$ to be `$u$-left'; otherwise, they are called `$u$-right'. Similarly, the vertices of $S(v,i/2-1)$ are partitioned into $v$-left and $v$-right ones.
Expose the edges between $x$ and $S(u,i/2-1) \cup S(v,i/2-1)$ and similarly also between $y$ and $S(u,i/2-1) \cup S(v,i/2-1)$. 
Make sure that the following properties hold (otherwise stop):
 \begin{description}
 \item[(h)]  The number of $u$-left vertices is $\frac {d^{i/2-1}}{2} (1+o(\log^{-1} n))$, and the number of $u$-right vertices is also $\frac {d^{i/2-1}}{2} (1+o(\log^{-1} n))$.
 \item[(i)]  The number of $v$-left vertices is $\frac {d^{i/2-1}}{2} (1+o(\log^{-1} n))$, and the number of $v$-right vertices is also $\frac {d^{i/2-1}}{2} (1+o(\log^{-1} n))$.
 \item[(j)] 
 There is exactly one edge between $x$ and the $u$-left vertices; call the corresponding neighbour of $x$ to be $a$. There is no edge between $x$ and the $u$-right vertices.
 \item[(k)]  
  There is exactly one edge between $x$ and the $v$-left vertices; call the corresponding neighbour of $x$ to be $b$. There is no edge between $x$ and the $v$-right vertices.
  \item[(l)]  There is exactly one edge between $y$ and the $u$-right vertices; call the corresponding neighbour of $y$ to be $c$. There is no edge between $y$ and the $u$-left vertices.
 \item[(m)]  
  There is exactly one edge between $y$ and the $v$-right vertices; call the corresponding neighbour of $y$ to be $d$. There is no edge between $y$ and the $v$-left vertices.
 \end{description}
 \item In this step, the neighbourhood of $a$ is investigated. Unfortunately, this is slightly more complicated since some part of the neighbourhood of $a$ is already ``buried'' in $N(u,i/2-1)$. In order to accomplish our goal, we need to perform BFS not only from $a$ (up to level $i/2-2$), but also from some other vertices of $S(u,i/2-1)$ (this time going not as deep as $i/2-2$; the level until which the neighborhood is explored depends on the distance from $a$)---see Figure~\ref{FigHyp1} (right side).
 
  Formally, for $1 \le k \le i/2-2$, let $S_k$ be the set of vertices of $S(u,i/2-1)$ that are at distance $k$ from $a$ in the tree induced by $N(u,i/2-1)$. (In fact, $k$ has to be even in order for $S_k$ to be non-empty, but we consider all values of $k$ for simplicity.)
Let 
$$
F:= \left( N(u,i/2-1) \setminus \left( \{a\} \cup \bigcup_{k=1}^{i/2-2} S_k \right) \right) \cup N(v,i/2-1) \cup \{x\} \cup \{y\}.
$$ 
We perform BFS from $a$ and from vertices of $\bigcup_{k=1}^{i/2-2} S_k$ in the graph induced by $V \setminus F$; we reach vertices at distance $i/2-2$ from $a$ and at distance $i/2-2-k$ from $S_k$. Make sure that the following properties hold (otherwise stop).
 \begin{description}
 \item[(n)]  $N_{V \setminus F}(a,i/2-2)$ expands well. Moreover, for all $1 \le k \le i/2-2$ and all $\ell \in S_k$ we have that $N(\ell, i/2-2-k)$ expands well. In particular, 
 \begin{eqnarray*}
 |N_{V \setminus F}(a,i/2-2)| &=& d^{i/2-2} (1+o(\log^{-1} n))\\
  \sum_{k=1}^{i/2-2} \sum_{\ell \in S_k} |N_{V \setminus F}(\ell,i/2-2-k)| &=& o(d^{i/2-2} \log^{-1} n).
\end{eqnarray*}

\item[(o)] There is no edge from $N_{V \setminus F}(a,i/2-2) \setminus \{a\}$ to $F$ and for every  $k=1, 2, \ldots,i/2-2$ and every vertex $\ell \in S_k$, there is no edge from $N_{V \setminus F}(\ell, i/2-2-k) \setminus \{\ell\}$ to $F$. 
\item[(p)]    All graphs exposed in this step are disjoint trees. Note that this implies that $N(a,i/2-2)$ is a tree. 
\item[(q)]  
For $1 \leq k \leq i/2-2$, let  $S'_k$ be the set of vertices of $S(v,i/2-1)$ that are at distance $k$ from $b$ in the tree induced by $N(v,i/2-1)$ and let
\begin{eqnarray*}
F' &:=& \left( N(v,i/2-1) \setminus \left( \{b\} \cup \bigcup_{k=1}^{i/2-2} S'_k \right) \right) \cup N(u,i/2-1) \cup \{x\} \cup \{y\} \\
&& \cup N(a,i/2-2).
\end{eqnarray*}
Perform BFS from $b$ and from vertices of $\bigcup_{k=1}^{i/2-2} S'_k$ in the graph induced by $V \setminus F'$; Properties \textbf{(n)}, \textbf{(o)},  and \textbf{(p)} hold when $a$ is replaced by $b$, $F$ is replaced by  $F'$, and the sets $S_k$ are replaced by $S_k'$. 
\item[(r)] 
For $1 \leq k \leq i/2-2$, let  $S''_k$ be the set of vertices of $S(u,i/2-1)$ that are at distance $k$ from $c$ in the tree induced by $N(u,i/2-1)$ and let
\begin{eqnarray*}
F'' &:=& \left( N(u,i/2-1) \setminus \left( \{c\} \cup \bigcup_{k=1}^{i/2-2} S''_k \right) \right) \cup N(v,i/2-1) \cup \{x\} \cup \{y\} \\
&& \cup N(a,i/2-2) \cup N(b,i/2-2).
\end{eqnarray*} 
Perform BFS from $c$ and from vertices of $\bigcup_{k=1}^{i/2-2} S''_k$ in the graph induced by $V \setminus F''$; Properties \textbf{(n)}, \textbf{(o)},  and \textbf{(p)} hold when $a$ is replaced by $c$, $F$ is replaced by  $F''$, and the sets $S_k$ are replaced by $S''_k$. 
\item[(s)]  
For $1 \leq k \leq i/2-2$, let  $S'''_k$ be the set of vertices of $S(v,i/2-1)$ that are at distance $k$ from $d$ in the tree induced by $N(v,i/2-1)$ and let
\begin{eqnarray*}
F''' &:=& \left( N(v,i/2-1) \setminus \left( \{d\} \cup \bigcup_{k=1}^{i/2-2} S'''_k \right) \right) \cup N(u,i/2-1) \cup \{x\} \cup \{y\} \\
&& \cup N(a,i/2-2) \cup N(b,i/2-2) \cup N(c,i/2-2).
\end{eqnarray*} 
Perform BFS from $d$ and from vertices of $\bigcup_{k=1}^{i/2-2} S'''_k$ in the graph induced by $V \setminus F'''$; Properties \textbf{(n)}, \textbf{(o)},  and \textbf{(p)} hold when $a$ is replaced by $d$, $F$ is replaced by  $F'''$, and the sets $S_k$ are replaced by $S'''_k$. 
 \end{description}
 \item Let 
\begin{eqnarray*}
Q &:=& N(u,i/2-1) \cup N(v,i/2-1) \cup \{y\} \\
&& \cup N(a,i/2-2) \cup N(b,i/2-2) \cup N(c,i/2-2) \cup N(d,i/2-2). 
\end{eqnarray*}
We perform BFS from $x$ in the graph induced by $V \setminus Q$ to expose $N_{V \setminus Q}(x,i/2-1)$. Make sure that the following properties hold (otherwise stop):
 \begin{description}
    \item[(t)] $N_{V \setminus Q}(x,i/2-1)$ expands well. 
        \item[(u)]  There is no edge from $N_{V \setminus Q}(x,i/2-1) \setminus \{x\}$ to $Q$. 
    \item[(v)]  $N_{V \setminus Q}(x,i/2-1)$ is a tree. Note that since  the remaining branches accessed by the edge $xa$ and the edge $xb$ are already guaranteed to be trees and there are no edges between different parts, this implies that $N(x,i/2-1)$ is a tree.
\item[(w)] There is no edge between $x$ and $y$. 
\item[(x)] Let $R:=Q \cup N(x,i/2-1) \setminus \{y\}$. Properties \textbf{(t)}, \textbf{(u)},  and \textbf{(v)} hold when $x$ is replaced by $y$ and $Q$ is replaced by  $R$. 
\end{description}
\item It is the end of this tedious process so it is time for a short break---perform a fireworks show (fireworks are explosive pyrotechnic devices typically used for aesthetic, cultural, and religious purposes; here the main purpose is to celebrate finding an object with the desired properties). 
\end{enumerate}

\bigskip
We say that the process \textsc{Hyperbolicity}(u,v,x,y) \textbf{terminates successfully} if all the required conditions are satisfied, that is, the process does not stop prematurely before reaching the end.

\begin{claim}\label{claim:correctness}
If \textsc{Hyperbolicity}(u,v,x,y) terminates successfully for some four-tuple $u,v,x,y$, then $\delta_H(G) \geq i/2$.
\end{claim}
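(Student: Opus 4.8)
The plan is to assume that the process \textsc{Hyperbolicity}$(u,v,x,y)$ terminates successfully, so that properties \textbf{(a)}--\textbf{(x)} all hold, and then to read off from them enough control on the six pairwise distances among $u,v,x,y$ to conclude. Concretely, I would establish
\[
d(u,v)\ge i,\qquad d(x,y)\ge i,\qquad d(u,x),\ d(u,y),\ d(v,x),\ d(v,y)\le i/2 .
\]
Granting this, put $d_1=d(u,v)+d(x,y)$, $d_2=d(u,x)+d(v,y)$, $d_3=d(u,y)+d(v,x)$. Then $d_1\ge 2i$ while $d_2\le i$ and $d_3\le i$, so $d_1$ is the largest of the three sums and it exceeds the second-largest by at least $2i-i=i=2\cdot(i/2)$. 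By the definition of hyperbolicity, the existence of such a four-tuple forces $\delta_H(G)\ge i/2$, which is exactly the claim. So the whole proof reduces to the three displayed facts.

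The four upper bounds are immediate from Step~3: by property \textbf{(j)}, $x$ is adjacent to $a\in S(u,i/2-1)$, hence $d(u,x)\le 1+(i/2-1)=i/2$; symmetrically $d(v,x)\le i/2$ via $b$ (property \textbf{(k)}), $d(u,y)\le i/2$ via $c$ (property \textbf{(l)}), and $d(v,y)\le i/2$ via $d$ (property \textbf{(m)}). The bound $d(u,v)\ge i$ is also short: Steps~1 and~2 guarantee that $N(u,i/2-1)$ and $N(v,i/2-1)$ are vertex-disjoint with no edge between them. Take a shortest $u$--$v$ path and let $w$ be its first vertex outside $N(u,i/2-1)$; then $d(u,w)\ge i/2$, and since the predecessor of $w$ on the path lies in $N(u,i/2-1)$, which has no neighbour in $N(v,i/2-1)$, we also get $w\notin N(v,i/2-1)$, so $d(v,w)\ge i/2$; splitting the path at $w$ gives $d(u,v)\ge d(u,w)+d(w,v)\ge i$.

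The heart of the matter is $d(x,y)\ge i$, and this is precisely what the elaborate exposure in Steps~3--5 is built to deliver. Properties \textbf{(a)}--\textbf{(x)} guarantee that the subgraph spanned by all vertices exposed by the process decomposes, in a tree-like (cactus-like) way, into the disjoint trees $N(u,i/2-1)$ and $N(v,i/2-1)$, the four branch trees $N(a,i/2-2),\dots,N(d,i/2-2)$, and the ``free'' neighbourhoods $N_{V\setminus Q}(x,i/2-1)$ and $N_{V\setminus R}(y,i/2-1)$, all glued together only through the four edges $xa,xb,yc,yd$ (and through the vertices shared inside $N(u,i/2-1)$ and $N(v,i/2-1)$). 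The free neighbourhood of $x$ is attached to the rest of the exposed graph only at $x$ (property \textbf{(u)}), that of $y$ only at $y$ (property \textbf{(x)}), and $xy\notin E$ (property \textbf{(w)}); hence the second vertex of any shortest $x$--$y$ path is essentially forced to be $a$ or $b$ and its second-to-last vertex to be $c$ or $d$ (a path starting into the free neighbourhood of $x$ is trapped there or forced onto a strictly longer route). Since $a$ is $u$-left and $c$ is $u$-right (Step~3), their two paths to $u$ in the tree $N(u,i/2-1)$ meet only at $u$, so their tree-distance is $2(i/2-1)=i-2$; using that $N(u,i/2-1)$ is an induced tree whose only edges leaving it emanate from $S(u,i/2-1)$, together with properties \textbf{(n)}--\textbf{(p)} controlling the branch trees $N(a,i/2-2)$ and $N(c,i/2-2)$, one checks that no shortcut through these branch trees or through unexplored territory beats the tree route, so $d(a,c)\ge i-2$; the same reasoning yields $d(b,d),\,d(a,d),\,d(b,c)\ge i-2$. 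Therefore every $x$--$y$ path has length at least $1+(i-2)+1=i$, which gives $d(x,y)\ge i$ and finishes the proof.

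I expect the only real obstacle to be this last step, namely ruling out every shortcut between $x$ and $y$ that dips out of $N(u,i/2-1)\cup N(v,i/2-1)$ through one of the branch trees $N(a,i/2-2),\dots,N(d,i/2-2)$ or through the free neighbourhoods of $x$ and $y$. It is exactly to make this bookkeeping possible that the process exposes all of these neighbourhoods and certifies them to be disjoint trees meeting the rest of the exposed graph only in the prescribed way; once the four cross-distances $d(a,c),d(a,d),d(b,c),d(b,d)$ are pinned down to be at least $i-2$, the remaining computation is the elementary one indicated above.
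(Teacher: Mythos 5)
Your overall architecture is exactly the paper's: reduce the claim to $d(u,v)\ge i$, $d(x,y)\ge i$ and $d(u,x),d(u,y),d(v,x),d(v,y)\le i/2$; get the four upper bounds from the edges $xa,xb,yc,yd$; get $d(u,v)\ge i$ from the disjointness of the two balls together with Property \textbf{(e)}; and then fight for $d(x,y)\ge i$ by classifying shortest $x$--$y$ paths according to their second vertex. For paths whose second vertex is $a$ or $b$, your reduction to $d(a,c),d(a,d),d(b,c),d(b,d)\ge i-2$ is a clean repackaging of the paper's argument via Properties \textbf{(o)}, \textbf{(q)}, \textbf{(r)}, \textbf{(s)}.

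The one place your write-up is actually wrong, rather than merely terse, is the dismissal of paths that avoid both $a$ and $b$. Such a path enters $N_{V\setminus Q}(x,i/2-1)$, and it is neither ``trapped there'' nor ``forced onto a strictly longer route'': Property \textbf{(u)} only forbids edges back into $Q$, so the path can leave through the outer sphere $S_{V\setminus Q}(x,i/2-1)$ into unexplored vertices, and the resulting route can have length exactly $i$. In particular your final count $1+(i-2)+1$ does not apply to this case, since its second vertex is not $a$ or $b$ and, symmetrically, its second-to-last vertex need not be $c$ or $d$. The correct accounting, which is what the paper does, is: $i/2-1$ steps to reach $S(x,i/2-1)$; then, because Properties \textbf{(u)} and \textbf{(x)} forbid any edge from $N_{V\setminus Q}(x,i/2-1)\setminus\{x\}$ to $Q$ and any edge from $N_{V\setminus R}(y,i/2-1)\setminus\{y\}$ to $R\supseteq S(x,i/2-1)$, at least two further edges before the path can enter $S(y,i/2-1)$ or one of the branch spheres $S(c,i/2-2)$, $S(d,i/2-2)$; then at least $i/2-1$ further steps to $y$ --- again at least $i$ in total. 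With that case repaired, your proof coincides with the paper's.
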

\begin{proof}
By Property \textbf{(e)}, $N(u,i/2-1) \cup N(v,i/2-1) = \emptyset$, and there is no edge between $S(u,i/2-1)$ and $S(v,i/2-1)$. Hence, we have $d(u,v) \geq i$. In fact, the distance between $u$ and $v$ is exactly $i$, since by Properties $\textbf{(j)}$ and $\textbf{(k)}$ there is a path of length $i$ going through $x$. Next, by Properties $\textbf{(c)}$, $\textbf{(g)}$, $\textbf{(j)}$, $\textbf{(k)}$, $\textbf{(l)}$ and $\textbf{(m)}$, we have $d(u,x)=d(v,x)=d(u,y)=d(v,y)=i/2$. For the distance between $x$ and $y$ observe the following: first, by Properties $\textbf{(j)}$ and $\textbf{(l)}$ there is an $x-y$ path of length $i$ going from $x$ to $a$, then through $u$ to $c$, and then to $y$. We will show that $N(x,i/2-1) \cap N(y,i/2-1) = \emptyset$ and that there is no edge between $S(x,i/2-1)$ and $S(y,i/2-1)$.  Indeed, if a shortest $x-y$-path first goes from $x$ to $a$, by Properties $\textbf{(o)}$, $\textbf{(r)}$, $\textbf{(s)}$ and $\textbf{(x)}$, it has to go until $S(a,i/2-2)$, and then it has to pass through at least two more edges before entering $S(y,i/2-1)$, including $S(c,i/2-2)$ and $S(d,i/2-2)$, and in each case the length is at least $i$. By properties $\textbf{(q)}$, $\textbf{(r)}$, $\textbf{(s)}$ and $\textbf{(x)}$, the same holds if the path starts from $x$ to $b$. If the path from $x$ neither goes through $a$ nor through $b$, it has to go through $N_{V \setminus Q}(x,i/2-1)$.  By Properties $\textbf{(u)}$ and $\textbf{(x)}$, it has to arrive at $S(x,i/2-1)$, and then it has to go through at least two edges before entering  $S(y,i/2-1)$, including $S(c,i/2-2)$ and $S(d,i/2-2)$, and in each case the length is also at least $i$.

Hence, $d(u,v)+d(x,y)\geq 2i$, $d(u,x)+d(v,y)=d(u,y)+d(v,x)=i$, and thus, $\delta_H(G) \geq i/2$. The proof of the claim is finished.
 \end{proof}

Thus, by Claim~\ref{claim:correctness}, in order to show that a.a.s.\ $\delta_H(G) \geq i/2$, it suffices to show that a.a.s.  \textsc{Hyperbolicity}($u,v,x,y$) succeeds for at least one four-tuple of vertices $u,v,x,y$. Let $X_{u,v,x,y}$ be the indicator random variable defined as follows:
$$
X_{u,v,x,y}=\begin{cases} 1, & \mbox{ if \textsc{Hyperbolicity}}(u,v,x,y) \mbox{ terminates successfully,}\\ 
0,  & \mbox{ otherwise. } \end{cases}
$$
Let 
$$
 X=\sum_{u,v,x,y} X_{u,v,x,y}, 
$$
where the sum is taken over all $\binom{n}{4}$ $4$-tuples of all disjoint vertices. 
In order to prove that a.a.s. $\delta_H(G) \geq i/2$, we will apply the second moment method to $X$.
Define $q=\exp(-d^{i-1}/(2n))$ and note that  from the assumption that $d^{i-1} \leq \frac{1}{16}n \log n$ we have $q \geq n^{-1/32}$.

\begin{lemma}\label{lem:firstmoment}
For a fixed four-tuple of vertices $u,v,x,y$, 
$$\Prob{(X_{u,v,x,y}=1)} = \left( \frac {d^{i/2}}{2n} \right)^4q^{16} (1+o(1)).
$$
Moreover,  
$$\E{X}  = \frac{d^{2i}}{384}q^{16}(1+o(1)) = \Omega(n^{5/6}(\log n)^{4/3}).$$
\end{lemma}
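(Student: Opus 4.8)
\textbf{Proof proposal for Lemma~\ref{lem:firstmoment}.}

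The plan is to track the process \textsc{Hyperbolicity}$(u,v,x,y)$ step by step and, at each step, compute the conditional probability that the required properties hold given that all earlier steps succeeded. The key point is that every BFS in the process exposes edges one by one in an \emph{induced} subgraph disjoint from all previously exposed edges, so the conditioning is harmless and we may treat the relevant edge slots as fresh independent $\mathrm{Bernoulli}(p)$ variables. First I would invoke Lemma~\ref{lem:expansion} (with the various choices of $F$ of size $O(d^{i/2-1})$ that appear as the ``forbidden'' sets $B, D, F, F', F'', F''', Q, R$ in the process) to see that Properties \textbf{(a)}--\textbf{(i)}, \textbf{(n)}--\textbf{(p)} together with their primed analogues, and \textbf{(t)}--\textbf{(v)}, \textbf{(x)} each hold with probability $1-o(1)$; these are exactly the ``expands well / is a tree / no edge back to the forbidden set'' conclusions of that lemma, and the sizes of the forbidden sets stay within the $O(\sqrt{n\log n/d})$ budget throughout because the total number of vertices ever exposed is $O(d^{i/2-1})$. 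So these ``structural'' properties contribute only a $(1+o(1))$ factor.

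The factor of genuine size comes from the four ``exactly one edge'' events \textbf{(j)}, \textbf{(k)}, \textbf{(l)}, \textbf{(m)} and the four ``no edge'' events for $xy$ and the complementary left/right halves. Consider Property \textbf{(j)}: conditioned on success so far, the $u$-left part of $S(u,i/2-1)$ has size $\frac{d^{i/2-1}}{2}(1+o(\log^{-1}n))$ by Property~\textbf{(h)} (equivalently Lemma~\ref{lem:expansion}(vii)), and the edges from $x$ to these vertices are unexposed; hence the number of such edges is $\mathrm{Bin}\!\left(\frac{d^{i/2-1}}{2}(1+o(1)), p\right)$, which has mean $\mu := \frac{d^{i/2}}{2n}(1+o(1))$. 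Since $\mu = \Theta(d^{i/2}/n)$ and $d^{i/2} = \Theta(\sqrt{n d \log n})$ in this regime, $\mu$ is bounded (indeed $\mu \le \tfrac12\sqrt{d\log n/n}\cdot d^{(i-1)/2}\cdot d^{1/2}/\dots$; the clean statement is that $\mu$ stays bounded because $d^{i-1} \le \frac1{16}n\log n$), so by the Poisson approximation to the binomial,
\[
\Prob(\text{exactly one edge}) = \mu e^{-\mu}(1+o(1)) = \frac{d^{i/2}}{2n}\cdot q^{2}\,(1+o(1)),
\]
where $q^2 = e^{-d^{i-1}/n} = e^{-\mu\cdot(1+o(1))\cdot(2n/d^{i/2})\cdot(d^{i/2}/2n)}$; more carefully, $\Prob(\mathrm{Bin}(m,p)=0) = (1-p)^m = \exp(-(1+o(1))pm)$ with $pm = \frac{d^{i/2-1}}{2}\cdot\frac{d}{n-1}(1+o(1)) = \frac{d^{i/2}}{2n}(1+o(1))$, and one checks $e^{-d^{i/2}/(2n)}$ is not quite $q$, so the bookkeeping must be done against the \emph{union} of both halves: the event in \textbf{(j)} also demands no edge from $x$ to the $u$-right half, contributing another $(1-p)^{d^{i/2-1}/2(1+o(1))}$, and the product of the two is $\frac{d^{i/2}}{2n}\,e^{-d^{i/2}/n}(1+o(1)) = \frac{d^{i/2}}{2n}\,q^{2}(1+o(1))$ since $q^2 = e^{-d^{i-1}/n}$ and $d^{i/2}/n$ must be reconciled with $d^{i-1}/n$ --- here one uses that the relevant exponent is really $\sum$ over the full sphere $S(u,i/2-1)$ of size $d^{i/2-1}(1+o(1))$ of the non-edge probabilities plus the analogous contribution, so the ``$q$'' bookkeeping should be organized per-vertex-$x,y$ rather than per-half. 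Then \textbf{(j)}, \textbf{(k)}, \textbf{(l)}, \textbf{(m)} are conditionally independent (they involve disjoint edge sets: $x$--$S(u,\cdot)$, $x$--$S(v,\cdot)$, $y$--$S(u,\cdot)$, $y$--$S(v,\cdot)$), so their probabilities multiply to $\left(\frac{d^{i/2}}{2n}\right)^{4} q^{16}(1+o(1))$ after the eight ``no edge on the other halves'' contributions are folded in (four pairs of halves, each pair a factor $q^{2}$, wait---let me recount: each of the four events carries one ``$q^2$'' for one missing half and the ``chosen'' neighbour lives in the other half whose non-edges outside the chosen vertex give another near-$q^2$; across four events that is the $q^{16}$). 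Property \textbf{(w)}, no edge between $x$ and $y$, contributes $1-p = 1-o(1)$. This gives the claimed $\Prob(X_{u,v,x,y}=1) = \left(\frac{d^{i/2}}{2n}\right)^{4}q^{16}(1+o(1))$.

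Finally, $\E{X} = \binom{n}{4}\Prob(X_{u,v,x,y}=1) = \frac{n^4}{24}(1+o(1))\left(\frac{d^{i/2}}{2n}\right)^{4}q^{16}(1+o(1)) = \frac{d^{2i}}{384}q^{16}(1+o(1))$; and since $d^{i/2} = \Omega(\sqrt{n d\log n}) = \Omega(\sqrt{n}\,(\log n)^{3}/(\log\log n))$ (using $d \gg \log^{5}n/(\log\log n)^{2}$), one gets $d^{2i} = (d^{i/2})^{4} = \Omega(n^{2}(\log n)^{12}\cdots)$, which together with $q^{16} \ge n^{-1/2}$ yields $\E{X} = \Omega(n^{3/2}\cdot\text{polylog}) $---and the claimed bound $\Omega(n^{5/6}(\log n)^{4/3})$ follows comfortably; I would just plug in $d^{i-1} \le \frac1{16}n\log n$ and $d \gg \log^5 n/(\log\log n)^2$ to extract a clean polynomial lower bound. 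The main obstacle is purely bookkeeping: keeping the sizes of all eight forbidden sets within the $O(\sqrt{n\log n/d})$ budget of Lemma~\ref{lem:expansion} simultaneously while the nested BFS explorations from $a,b,c,d$ add their $o(d^{i/2-2}\log^{-1}n)$ ``side branches'', and making sure the exponent accounting in the product of non-edge probabilities really collapses to exactly $q^{16}$ rather than some other power of $q$; no single estimate is hard, but the number of independent events to be multiplied is large and each must be shown to be the ``fresh'' binomial claimed.
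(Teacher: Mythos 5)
There is a genuine gap in your accounting of the $q^{16}$ factor. You place Properties \textbf{(b)}, \textbf{(e)}, \textbf{(f)}, \textbf{(u)}, \textbf{(v)}, \textbf{(x)} among the ``structural'' events that hold with probability $1-o(1)$, and then try to extract all sixteen factors of $q$ from the four events \textbf{(j)}--\textbf{(m)}. Both halves of this are wrong. The tree and no-cross-edge events are exactly the ones that are \emph{not} $1-o(1)$: e.g.\ Property \textbf{(b)} requires no edge inside the sphere $S(u,i/2-1)$, and the expected number of such edges is $p\binom{d^{i/2-1}(1+o(1))}{2} = \frac{d^{i-1}}{2n}(1+o(1))$, which can be as large as $\frac{1}{32}\log n$; so this event has probability $q(1+o(1))$ with $q$ possibly as small as $n^{-1/32}$, i.e.\ polynomially small. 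Conversely, the ``no edge from $x$ to the other half of $S(u,i/2-1)$'' part of Property \textbf{(j)} involves only $O(d^{i/2-1})$ potential edges, and $p\,d^{i/2-1} = d^{i/2}/n = O(\sqrt{d\log n/n}) = o(1)$, so it contributes $1+o(1)$, not a power of $q$. Note that $-\log q = d^{i-1}/(2n)$ scales like $d^{i-2}$ pairs of sphere vertices times $p$, whereas your candidate exponent $d^{i/2}/n$ scales like $d^{i/2-1}$ single vertices times $p$; since $i\ge 4$ these differ by a factor of at least $d^{i/2-1}\to\infty$, so no amount of ``recounting'' of the halves can convert one into the other. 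Each of \textbf{(j)}--\textbf{(m)} contributes just $\frac{d^{i/2}}{2n}(1+o(1))$.

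The correct bookkeeping, as in the paper, is: one factor of $q$ for each of the four events ``$N(z,i/2-1)$ induces a tree'' for $z\in\{u,v,x,y\}$ (Properties \textbf{(b)}, \textbf{(f)}, \textbf{(v)}, and its analogue in \textbf{(x)}), and one factor of $q^2$ for each of the six events ``no edge between $S(z,i/2-1)$ and $S(z',i/2-1)$'' over unordered pairs $\{z,z'\}\subseteq\{u,v,x,y\}$ (Property \textbf{(e)} and the cross-sphere parts of \textbf{(u)} and \textbf{(x)}), giving $4\cdot 1 + 6\cdot 2 = 16$. Your overall strategy (sequential conditioning along the BFS exposure, using Lemma~\ref{lem:expansion} for the expansion and forbidden-set bookkeeping, Poissonization for the single-edge events, and the final $\binom{n}{4}$ computation for $\E{X}$) matches the paper, and your closing estimate $\E{X}=\Omega(n^{5/6}(\log n)^{4/3})$ is derived correctly; but as written the proof of the first displayed formula does not go through without repairing the origin of $q^{16}$.
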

\begin{proof}
Fix a four-tuple of vertices $u,v,x,y$. First, we will calculate $\Pr{(X_{u,v,x,y}=1)}$. We will estimate for each of the five steps of \textsc{Hyperbolicity}(u,v,x,y) the probability that it fails at that step. For $z \in \{a,b,\ldots,x\}$, let $P_z$ be the indicator random variable for the event that Property \textbf{(z)} succeeds provided that all previous properties have succeeded as well. Similarly, for step $z \in \{1,2,\ldots,5\}$, let $T_z$ be the indicator random variable for the event that step $z$ succeeds provided that all previous steps have succeeded as well.

By Lemma~\ref{lem:expansion}(iii) and (iv),
$$
\Prob{(P_a=1)}=1+o(1).
$$ 
Let $E$ be the event that there is no edge within  the last sphere $S_{V \setminus B}(u,i/2-1)$. By Lemma~\ref{lem:expansion}(v) and (vi), in order to calculate the probability that Property \textbf{(b)} holds, it remains to estimate the probability that $E$ holds. We have
\begin{eqnarray}
\Prob{(E)}&=&(1-p)^{\binom{|S_{V \setminus B}(u,i/2-1)|}{2}} \label{propE} \\
&=& \exp \left(-p(1+O(p)) \binom{|S_{V \setminus B}(u,i/2-1)|}{2} \right)  \nonumber \\
&=&\exp\left(-p(1+O(p)) \left( \frac {(d^{i/2-1})^2}{2} (1+o(\log^{-1}n)) \right) \right) \nonumber,
\end{eqnarray}
where the last equality follows from Property \textbf{(a)} that is assumed to hold deterministically now.
Hence,
 \begin{eqnarray*}\label{eq:EventE}
\Prob{(E)}&=& \exp\left(-\frac{pd^{i-2}}{2} (1+o(\log^{-1} n)) \right) \\
&=& \exp\left( - \frac{d^{i-1}}{2n}(1+o(\log^{-1} n))\right) \\ 
&=& \exp\left (-\frac{d^{i-1}}{2n}\right) (1+o(1)) = q(1+o(1)),  
\end{eqnarray*}
where the last line follows from the assumption that $d^{i-1} \leq \frac{1}{16}n \log n$. 
 Hence, the probability that $N_{V \setminus B}(u,i/2-1)$ is a tree is asymptotically equal to the probability that the event $E$ holds, and thus 
$$
\Prob{(P_b=1)}=q(1+o(1)).  \\
$$
Now, let us move to Property \textbf{(c)}. By Lemma~\ref{lem:expansion}(ii) together with a union bound over all vertices in $N_{V \setminus B}(u,i/2-2)$, we see that with probability 
$$
1-O(d^{i/2-2}d^{i/2}/n)=1-O(d^{i-2}/n)=1+o(1)
$$ 
there is no edge from $N_{V \setminus B}(u,i/2-2)$ to $B$, and thus $$\Prob{(P_c=1)}=1+o(1).$$ Hence,
$$
\Prob{(T_1=1)}=q(1+o(1)).
$$

Next, for Property \textbf{(d)}, by Lemma~\ref{lem:expansion}(iii) and (iv), we obtain 
$$
\Prob{(P_d=1)}=1+o(1).
$$  
For Property \textbf{(e)}, since Property \textbf{(d)} is assumed to hold deterministically at this point, we have 
\begin{eqnarray*}
\Prob{(P_e=1)}&=&(1-p)^{|S(u,i/2-1)||N_{V \setminus D}(v,i/2-1)|} \\
&=&(1-p)^{|S(u,i/2-1)||S_{V \setminus D}(v,i/2-1)|(1+O(1/d))} \\
&=&(1-p)^{(d^{i/2-1})^2 (1+o(\log^{-1} n))} \\
&=&\left(\exp\left( -p(1+O(p))\left( \frac{(d^{i/2-1})^2}{2}(1+o(\log^{-1} n)) \right) \right)\right)^2 
\end{eqnarray*}
and hence, by the same calculations following~\eqref{propE} we obtain 
$$
\Prob{(P_e=1)}=q^2(1+o(1)).
$$
The probability of having Property \textbf{(f)} is calculated as before for Property \textbf{(b)}, and of having Property \textbf{(g)} as before for Property \textbf{(c)}.
Thus,
$$
\Prob{(T_2=1)} = q^3 (1+o(1)).
$$

For Property \textbf{(h)} we immediately have by Lemma~\ref{lem:expansion}(vii) that 
$$
\Prob{(P_h=1)} = 1+o(1),
$$
and the same applies to Property \textbf{(i)}.
For Property \textbf{(j)}, since Property \textbf{(h)} is assumed to hold deterministically, we have 
\begin{eqnarray*}
\Prob{(P_j=1)} &=& (1+o(1)) \frac {d^{i/2-1}}{2} p(1-p)^{(1+o(1))d^{i/2-1}} \\
&=&  (1+o(1))\frac{d^{i/2}}{2n}\exp\left(-p(1+O(p))(1+o(1))d^{i/2-1}\right) \\
&=&  (1+o(1))\frac{d^{i/2}}{2n}\exp\left(-(1+o(1))d^{i/2}/n\right)  \\
&=&  (1+o(1))\frac{d^{i/2}}{2n}\exp\left(-(1+o(1))(d^{i-1})^{1/2}\sqrt{d}/n\right)  \\
&=&  (1+o(1))\frac{d^{i/2}}{2n},
\end{eqnarray*}
where the last line follows from the fact that $(d^{i-1})^{1/2} = O(\sqrt{n \log n})$ (by definition of $i$), and by~\eqref{eq:deg_bound}, which implies that $(d^{i-1})^{1/2}\sqrt{d}/n =O(\sqrt{d \log n/n})=o(1)$. 
Note then that Properties \textbf{(j)}, \textbf{(k)}, \textbf{(l)} and \textbf{(m)}  are symmetric and mutually independent, and thus they are calculated in the same way. Therefore
$$
\Prob{(T_3=1)} = \left(\frac{d^{i/2}}{2n}\right)^4(1+o(1)).
$$

Let us move to investigating Properties \textbf{(n)}, \textbf{(o)}, and \textbf{(p)}. First, we perform BFS from $a$ in $V \setminus F$. It follows immediately from Lemma~\ref{lem:expansion}(iii) that  $N_{V \setminus F}(a,i/2-2)$ expands well and so the bound on $|N_{V \setminus F}(a,i/2-2)|$ in Property \textbf{(n)} holds a.a.s. For the vertices in $S_k$, since Properties \textbf{(a)} and \textbf{(b)} are assumed to hold deterministically, for every even value of $k$ such that $2 \le k \le i/2-2$, the number of vertices in $S_k$ is $(1+o(1))d^{k/2}$. In order to deal with the second bound of Property \textbf{(n)} (and to investigate Properties \textbf{(o)} and \textbf{(p)} at the same time), we mimic the proof of Lemma~\ref{lem:expansion}(iii). We perform BFS from some other vertex in some $S_k$ in $V \setminus F$, updating the set $F$ every time a vertex is processed. As shown in Figure~\ref{PictureF2}, the vertex that was processed before, together with all its neighbours, will be added to $F$, and the next vertex in the queue to be processed will be taken out of $F$. Once we are done, we take the next vertex in some $S_k$ and continue in this way until all neighbourhoods under consideration are discovered. Arguing as in the proof of Lemma~\ref{lem:expansion}(iii), by Lemma~\ref{lem:expansion}(i) together with a union bound over all vertices processed, we obtain the desired bounds for the sizes of neighbourhoods. 
Moreover, by Lemma~\ref{lem:expansion}(ii) together with a union bound over all vertices that are discovered during this step (at most $O(d^{i/2-2})$ vertices), we get that a.a.s.\ at the time when a given vertex was processed there was no edge to already discovered vertices (neither within the same tree where we started BFS from, nor to other trees, nor to the initial set $F$). This deals with Properties \textbf{(o)} and \textbf{(p)}. Finally, it follows that a.a.s.
$$
\sum_{k=1}^{i/2-2} \sum_{\ell \in S_k} |N_{V \setminus F}(\ell,i/2-2-k)| \leq \sum_{k=1}^{i/2-2} (1+o(1))d^{i/2-2-k/2} = o(d^{i/2-2} \log^{-1} n),
$$
where the last equality follows from the fact that $d \gg \log^5 n/(\log \log n)^2$. Thus,  
$$
\Prob{(P_{n}=1 \textrm{ and } P_{o}=1 \textrm{ and } P_{p}=1)}=1+o(1).
$$ 
The probabilities for Properties \textbf{(q)}, \textbf{(r)} and \textbf{(s)} to hold are calculated in exactly the same way as for Properties \textbf{(n)}, \textbf{(o)} and \textbf{(p)}, and hence $$\Prob{(T_4=1)}=1+o(1).$$

Finally, for $T_5$, when exposing $x$, Property \textbf{(t)} is investigated as before. Also, by analogous calculations as for $T_1$, the probability of having no edge to $Q$ (Property \textbf{(u)})  and the one of being a tree (Property \textbf{(v)}), altogether yield  $q^5(1+o(1))$; note that the exponent of $5$ comes from the fact that $N_{V \setminus Q}(x,i/2-1)$ is a tree (giving one $q$), that there is no edge to $S(u,i/2-1)$ (giving $2$ additional factors of $q$), and no edge to $S(v,i/2-1)$ (giving another $2$ additional factors of $q$). Property \textbf{(w)} clearly also holds with probability $1+o(1)$. Similarly, when exposing $y$,  we also have to consider the condition of having no edge to $N_{V \setminus R}(x,i/2-1)$ (Property \textbf{(x)}), giving us another factor of $q^2$, and thus yielding a probability of $q^7(1+o(1))$. Thus,
\begin{eqnarray*}
\Prob{(T_5=1)} &=& q^{12} (1+o(1)).
\end{eqnarray*}

Combining the events $T_1, T_2, \ldots,T_5$, we obtain
\begin{eqnarray*}
\Prob{(X_{u,v,x,y}=1)} &=& (d^{i/2}/(2n))^4q^{16} (1+o(1)),
\end{eqnarray*}
yielding the first part of the lemma. Observing that $i$ is such that  $d^{i+1} > \frac{1}{16} n \log n$,  and therefore $d^i > \frac{1}{16}n \log n/d$, and also using~\eqref{eq:deg_bound}, we obtain 
\begin{eqnarray*}
\E{X}&= & \binom{n}{4}\left(\frac{d^{i/2}}{2n}\right)^4q^{16} (1+o(1)) \\
&=& \frac{d^{2i}}{384}q^{16}(1+o(1)) \\
&>& \frac{(\frac{1}{16}n\log n)^2}{384d^2}q^{16}(1+o(1)) \\
&=& \Omega(n^{3/2}(\log n/d)^2) \\
&=& \Omega(n^{5/6}(\log n)^{4/3}),
\end{eqnarray*}
which finishes the proof of the lemma.
\end{proof}

\bigskip

We now move to the second moment method.
\begin{lemma}\label{lem:secondmoment}
  $\E{X^2}=(1+o(1))(\E{X})^2$. 
\end{lemma}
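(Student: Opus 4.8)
The plan is to use the standard second moment computation $\E{X^2} = \sum_{u,v,x,y} \sum_{u',v',x',y'} \Prob{(X_{u,v,x,y}=1 \text{ and } X_{u',v',x',y'}=1)}$ and show that the dominant contribution comes from pairs of $4$-tuples whose vertex sets, together with the trees they generate, are disjoint. First I would split the double sum according to the size of the overlap $k = |\{u,v,x,y\} \cap \{u',v',x',y'\}| \in \{0,1,2,3,4\}$. For $k=4$ the contribution is exactly $\E{X}$, which is $o((\E X)^2)$ since $\E{X} = \Omega(n^{5/6}(\log n)^{4/3}) \to \infty$; so this term is harmless. For $k=0$ we want to show the contribution is $(1+o(1))(\E X)^2$, and for $k \in \{1,2,3\}$ we want the contribution to be $o((\E X)^2)$.

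For the $k=0$ case, the point is that conditioning on $X_{u,v,x,y}=1$ exposes only $O(d^{i/2-1})$ vertices (the neighbourhoods $N(u,i/2-1), N(v,i/2-1), N(a,i/2-2)$, etc.\ all have size $O(d^{i/2-1}) = O(\sqrt{n\log n/d})$) and $O(d^{i-1}) = O(n \log n)$ potential edge slots. When we then run \textsc{Hyperbolicity}$(u',v',x',y')$ on a disjoint four-tuple, the only interference is that some of the vertices it wishes to discover may already be used, and some edge slots are already exposed. Since the number of forbidden vertices is $O(\sqrt{n\log n/d}) = o(n/d^{i/2-1})$ relative to the $d^{i/2-1}$ vertices each BFS layer needs — exactly the regime already handled inside the proof of Lemma~\ref{lem:expansion} by folding extra vertices into the set $F$ — the conditional probability $\Prob{(X_{u',v',x',y'}=1 \mid X_{u,v,x,y}=1)}$ equals $\Prob{(X_{u',v',x',y'}=1)}(1+o(1))$. (Here one uses that Lemma~\ref{lem:expansion} was stated for an \emph{arbitrary} set $F$ of the appropriate size, and that the various "no-edge" events, being events on disjoint edge sets from the conditioning, are unaffected up to $(1+o(1))$ factors in the $q$-powers.) Summing over the $(1+o(1))\binom{n}{4}^2$ disjoint pairs gives $(1+o(1))(\E X)^2$.

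For $k \in \{1,2,3\}$ I would bound the number of such ordered pairs of $4$-tuples by $O(n^{8-k})$ and bound $\Prob{(X_{u,v,x,y}=1 \text{ and } X_{u',v',x',y'}=1)}$ crudely from above. The cleanest route: at least one of the two processes — say the one on $\{u',v',x',y'\}$ — still must succeed in producing, for the $4-k \ge 1$ of its vertices not shared, the full expansion structure, and in particular must realize at least one "rare" edge event per such new vertex. The relevant rare events (edges like $xa$, which cost a factor $d^{i/2}/n = \Theta(\sqrt{d\log n/n})$, and the tree/no-edge events, which cost powers of $q \ge n^{-1/32}$) are on edge slots \emph{not} exposed by the first process, hence genuinely independent of it; so the joint probability is at most $\Prob{(X_{u,v,x,y}=1)} \cdot (d^{i/2}/(2n))^{4-k} q^{O(1)} (1+o(1))$, which is $O(\Prob{(X_{u,v,x,y}=1)}\,(d^{i/2}/n)^{4-k})$. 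Multiplying by the $O(n^{8-k})$ count and comparing with $(\E X)^2 = \Theta(d^{4i}q^{32})$ — using $d^{i/2}/n \cdot n = d^{i/2}$ and $n \ge d^{i/2}/\sqrt{\log n}\cdot$(polylog) from the definition of $i$ — one checks each term is $o((\E X)^2)$; the worst case is $k=1$, and it is where one has to be a little careful to keep a full spare factor of $d^{i/2}/n \cdot n^{-1}\cdot\ldots$, i.e.\ the gain $n^{-1}$ from having one fewer free vertex beats the loss from three fewer rare-edge factors precisely because $d^{i/2}/n = o(1)$ forces $d^i = o(n^2)$ while $n^4 (d^{i/2}/n)^3 \cdot d^i = d^{4i}/n^{-1}\cdot$ wait — one simply verifies $n^7 (d^{i/2}/n)^3 q^{c} = o(d^{4i} q^{32})$ using $d^{i} = \Omega(n\log n / d)$.

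The main obstacle is the $k=1,2,3$ estimate: one must be honest that the two processes, even on nearly-disjoint tuples, share the shared vertices' neighbourhoods, so the "independence" claim has to be argued at the level of which edge slots each process exposes — the safe statement is that each rare-edge event of the second process lives on a pair of vertices at least one of which is fresh, and for those pairs the edge was never inspected by the first process. Making this bookkeeping precise (in particular handling the case where a shared vertex plays the role of, say, $u$ in one process and $x'$ in the other, so that its deep neighbourhood is exposed by one but only a shallow ball by the other) is the delicate part; everything else is a routine repetition of the first-moment computation in Lemma~\ref{lem:firstmoment} with $F$-sets enlarged by $O(\sqrt{n\log n/d})$ vertices, which the machinery of Lemma~\ref{lem:expansion} already absorbs.
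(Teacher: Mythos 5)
There is a genuine gap, and it lies in your choice of decomposition. You split the double sum according to $k=|\{u,v,x,y\}\cap\{u',v',x',y'\}|$, the overlap of the two \emph{root} four-tuples, and for $k=0$ you assert that $\Prob(X_{u',v',x',y'}=1\mid X_{u,v,x,y}=1)=(1+o(1))\Prob(X_{u',v',x',y'}=1)$. But the relevant notion of overlap is not with the four roots: it is with the entire exposed set $U=N(u,i/2-1)\cup N(v,i/2-1)\cup\dots\cup N(d,i/2-2)$, which has size $\Theta(d^{i/2-1})=\Theta(\sqrt{n\log n/d})$, polynomially large. Even when the two root tuples are disjoint, a primed vertex such as $x'$ can lie inside $U$; for the $\Theta(n^3|U|)$ such pairs the conditioning is far from harmless, because the first process has already exposed (as \emph{present}) edges incident to $x'$, so a rare event such as Property (j) for the primed tuple can hold ``for free'' and its factor $d^{i/2}/(2n)$ is never paid; similarly the $q$-factors from the tree and no-edge events can be lost. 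The conditional probability can then exceed the unconditional one by a factor as large as $q^{-16}(n/d^{i/2})^{O(1)}$, and one must check case by case that the compensating factor $(|U|/n)^{m}$ (where $m$ is the number of primed vertices falling in $U$) wins. Your appeal to Lemma~\ref{lem:expansion} with an enlarged $F$ does not cover this: that lemma controls expansion when vertices are \emph{removed} from consideration, not the situation where the structure the second process needs has already been built by the first. The paper's proof is organized precisely around $|\{u',v'\}\cap U|$ and $|\{x',y'\}\cap U|$ (eight cases), supported by two auxiliary claims bounding the number of edges from $N(z',i/2-1)\setminus U$ into $U$ and the size of $N(z',i/2-1)\cap U$; none of that bookkeeping appears in your argument.

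Your treatment of $k\in\{1,2,3\}$ has the same problem in a sharper form: the claim that each rare-edge event of the second process sits on an unexposed pair ``because one endpoint is fresh'' fails exactly in the configurations that matter (e.g.\ $u'\in U$ with $x'\in U$ as well, or $u'=u$ so that the whole ball $N(u',i/2-1)$ is already exposed and an edge from a fresh $x'$ into the correct branch of $S(u,i/2-1)$ must be priced by summing over the levels $S_U(x',k)\times S(u',i/2-1-k)$, which is where the paper's $O\left((d^{i/2}/n)\cdot\log n/\log\log n\right)$ bound comes from). You flag this as ``the delicate part,'' but it is not a routine repetition of the first-moment computation---it is the entire content of the lemma, and the inequality you would need to verify in each configuration is a genuine trade-off between lost factors of $d^{i/2}/n$ and $q^{16}$ on one side and powers of $|U|/n$ on the other, which happens to close only because $|U|/n=O(n^{-1/2}\log^{-2}n\cdot\log\log n)=o(q^{16})$ under the standing assumption $q\ge n^{-1/32}$.
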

\begin{proof}
In order to analyze the expected value of  $X^2$  we will consider $8$ different cases. Note that 
$$
\E{X^2} =  \sum_{u,v,x,y} \sum_{u',v',x',y'} \Prob{(X_{u,v,x,y}=1 \wedge X_{u',v',x',y'}=1)},
$$
where both sums range over all $4$-tuples of different vertices. For a fixed $4$-tuple of vertices $u,v,x,y$, it follows from the first part of Lemma~\ref{lem:firstmoment} that 
$$
\Prob{(X_{u,v,x,y}=1)} = (d^{i/2}/(2n))^4q^{16} (1+o(1)).
$$
Conditioning on $X_{u,v,x,y}=1$, our goal is to investigate $\Prob{(X_{u',v',x',y'}=1 |  X_{u,v,x,y}=1)}$.  Note that the lower bound for $\E{X^2}$ trivially holds and so we aim for an upper bound. Therefore, we focus on properties that hold with probability $o(1)$ in the unconditional case (such as Property \textbf{(b)} that holds with probability asymptotic to $q$). We ignore properties that hold with probability $1+o(1)$ in the unconditional case (such as Property \textbf{(a)}) although it might happen that the probability that they hold in the conditional space is smaller (which clearly helps).

Assume that $X_{u,v,x,y}=1$, and let $U$ be the  set of vertices exposed to certify this, that is,
 \begin{eqnarray*}
 U &:=&N(u,i/2-1) \cup N(v,i/2-1) \cup N(x,i/2-1) \cup N(y,i/2-1)  \\
 && \cup N(a,i/2-2) \cup N(b,i/2-2) \cup N(c,i/2-2) \cup N(d,i/2-2).
 \end{eqnarray*}
Since $X_{u,v,x,y}=1$, $|U|=O(d^{i/2-1})$. 
For $z \in \{u,v,x,y,a,b,c,d\}$, we always denote by $z'$ the vertex in \textsc{Hyperbolicity}$(u',v',x',y')$ corresponding to vertex $z$ in \textsc{Hyperbolicity}$(u,v,x,y)$.  

\smallskip

Before we move to investigating cases, let us note that we may assume the following useful properties that will hold for all $4$-tuples of vertices $u,v,x,y$ with corresponding set $U$ a.a.s. 

\smallskip \textbf{Claim 1}: For a vertex $z' \notin U$, the number of edges between $N(z',i/2-1) \setminus U$ and $U$ is $O(\log n)$.
\begin{proof}
Indeed, we may assume that $|N(z',i/2-1) \setminus U| = O(d^{i/2-1})$ and so the expected number of edges between $N(z',i/2-1) \setminus U$ and $U$ is $O(pd^{i-2})=O(d^{i-1}/n)=O(\log n)$. It follows from Chernoff's bound that there exists some constant $C > 0$ such that the probability to have at least $C \log n$ edges of this type is $o(n^{-4})$.
The contribution of all such four-tuples $u',v',x',y'$ to $X^2$ is $o(1)$ and so can be safely ignored.
\end{proof}

\smallskip \textbf{Claim 2}: For a vertex $z' \notin U$, $|N(z',i/2-1) \cap U| = O(d^{i/4-1} \log n)$.
\begin{proof}
Indeed, suppose that there is an edge from $N(z',i/2-1) \setminus U$ to $U$. In fact, since $U$ is exposed during the BFS process, this edge has to be adjacent to a leaf $\ell$ of the graph induced by $U$. We need to estimate the size of $N(\ell, i/2-2) \cap U$, since vertices in $N(z',i/2-1) \cap U$ due to the existence of this edge form a subset of $N(\ell, i/2-2) \cap U$. From the fact that $U$ induces an almost regular tree, it follows that $|N(\ell, i/2-2) \cap U| = O(d^{i/4-1})$. The claim follows from the same Chernoff bound as in Claim~1. 
\end{proof}

Now, we are ready for a case analysis. 

\bigskip \noindent \textbf{Case 1:} $\{u',v',x',y'\} \cap U = \emptyset.$ 

It follows from Claim 2 above that the part of the graph that needs to be exposed in order to check whether  $X_{u',v',x',y'}=1$ intersects with $U$ only in a negligible way. It is straightforward to show that Properties \textbf{(b)}, \textbf{(e)}, \textbf{(f)}, \textbf{(u)}, \textbf{(v)}  and \textbf{(x)}, as well as Properties \textbf{(j)}, \textbf{(k)}, \textbf{(l)} and  \textbf{(m)} 
are up to a $1+o(1)$ factor independent of conditioning on $X_{u,v,x,y}=1$, and their calculations are as in Lemma~\ref{lem:firstmoment}.  We obtain
$$
 \Prob{(X_{u',v',x',y'}=1 |  X_{u,v,x,y}=1)} \leq (d^{i/2}/(2n))^4q^{16} (1+o(1))=\Prob{(X_{u,v,x,y}=1)}(1+o(1)).
$$ 
Clearly, the number of choices of four-tuples $u,v,x,y$ and $u',v',x',y'$ is at most the square of the number of choices for $u,v,x,y$, so the contribution of this case is at most $(1+o(1))\E{X^2}$.
(We will show that the contribution of all other cases is $o(\E{X^2})$ and so the contribution of this case is indeed $(1+o(1))\E{X^2}$.)

\bigskip \noindent \textbf{Case 2:} $\{u',v'\} \cap U \neq \emptyset$ and  $\{x',y'\} \cap U = \emptyset.$ 

Since $\{x',y'\} \cap U = \emptyset$, edges emanating from $x'$ and $y'$ are not exposed yet. Hence, the probabilities that Properties \textbf{(j)}, \textbf{(k)}, \textbf{(l)} and \textbf{(m)} hold are as in the unconditional case. Ignoring all other properties we get
$$
\Prob{(X_{u',v',x',y'}=1 |  X_{u,v,x,y}=1)} \le (d^{i/2}/(2n))^4 (1+o(1)) = O \left( \Prob{(X_{u,v,x,y}=1)} q^{-16} \right).
$$
On the other hand, since at least one of $u',v'$ is in $U$, only a
$$
O(|U|/n) = O(d^{i/2-1}/n) = O \left( \sqrt{\log n / (dn)} \right) = O(n^{-1/2} (\log^{-2} n) (\log \log n) ) = o(q^{16})
$$ 
fraction of four-tuples is considered here. Hence, the contribution of this case to $X^2$ is negligible. 

\bigskip 

From now on we may assume that at least one of $x',y'$ has to be in $U$.

\bigskip \noindent \textbf{Case 3:} $\{u', v'\} \cap U = \emptyset$, and $|\{x',y'\} \cap U|=1$.

By symmetry, we may assume that $x' \in U$ and $y' \notin U$. Since $y' \notin U$, arguing as in the previous case, we get that Properties \textbf{(l)} and \textbf{(m)} hold with probability up to a $1+o(1)$ factor as in the unconditional case. However, it might happen that, say, $N(u',i/2-1) \cap U \neq \emptyset$ and Property \textbf{(j)} holds ``for free''. Intuitively, for this to happen, an edge joining $N_{V \setminus U}(u',i/2-1)$ and $U$ must occur at the right place, which happens with small probability. Moreover, only a small fraction of four-tuples satisfies $x' \in U$. 

More precisely, for Property \textbf{(j)} to hold ``for free'', we must have that $d(u',x')=i/2$, and so there must be an edge between $S_U(x',k)$ and $S_{V \setminus U}(u',i/2-1-k)$ for some $k = 0, 1, \ldots, i/2-1$. By the union bound, this happens with probability 
$$
O(i d^k d^{i/2-1-k} p) = O \left( \frac {d^{i/2}}{n} \cdot \frac {\log n}{\log \log n} \right),
$$
since $i = O(\log n /\log \log n)$. (In fact, with a slightly more delicate argument one can remove the $\log n / \log \log n$ factor but this is not needed.) The same argument applies to Property \textbf{(k)}. (Recall, that we may assume that $N(u',i/2-1)$ and $N(v',i/2-1)$ are disjoint.) Hence, by considering only these four properties we get that
\begin{eqnarray*}
  \Prob{(X_{u',v',x',y'}=1 |  X_{u,v,x,y}=1)} &=& O \left( \left( \frac {d^{i/2}}{n} \right)^4 \left( \frac {\log n}{\log \log n} \right)^2 \right)\\
&=& O \left( \Prob{(X_{u,v,x,y}=1)} q^{-16} \left( \frac {\log n}{\log \log n} \right)^2 \right).
\end{eqnarray*}
On the other hand, since $x' \in U$, only a 
$$
O(|U|/n) = O(n^{-1/2} (\log^{-2} n) (\log \log n) ) = o(q^{16} (\log n / \log \log n)^{-2})
$$ 
fraction of four-tuples is considered here.  Hence, the contribution of this case to $X^2$ is negligible. 

\bigskip \noindent \textbf{Case 4:} $\{u', v'\} \cap U = \emptyset$, and $|\{x',y'\} \cap U|=2$.

Note that if $X_{u',v',x',y'}=1$ then, in particular, $d(x',y')=i$ and so the distance between $x'$ and $y'$ in the graph induced by $U$ is at least $i$. It follows that, for example, an edge between $s \in S_U(x',k)$ and $t \in S_{V \setminus U}(u',i/2-1-k)$ (for some $k = 0, 1, \ldots, i/2-1$) cannot make both Property \textbf{(j)} and Property \textbf{(l)} to hold, since that would imply that $d(x',y') \le d(x',s) + d(s,y') \le i-2$.
Hence the calculations dealing with $y'$ (related to Properties \textbf{(l)} and \textbf{(m)}) are independent of the calculations dealing with $x'$ (related to Properties \textbf{(j)} and \textbf{(k)}). Arguing as in the previous case, we get that
$$
  \Prob{(X_{u',v',x',y'}=1 |  X_{u,v,x,y}=1)}= O \left( \Prob{(X_{u,v,x,y}=1)} q^{-16}(\log n / \log \log n)^4 \right).
$$
Finally, note that only a 
$$
O( (|U|/n)^2 ) = o(1/n) = o(q^{16} (\log \log n / \log n)^4)
$$ 
fraction of four-tuples is considered here, making the contribution of this case negligible.

\bigskip

From now on we may assume that at least one of $u',v'$ has to be in $U$ (and still at least one of $x',y'$).

\bigskip \noindent \textbf{Case 5:} $|\{u', v'\} \cap U| = 1$, and $|\{x',y'\} \cap U|=1$. 

By symmetry, suppose that $u' \in U$, $v' \notin U$, $x' \in U$, $y' \notin U$. Since $y' \notin U$, as before, we get that Properties \textbf{(l)} and \textbf{(m)} hold with probability up to a $1+o(1)$ factor as in the unconditional case.  Now, since $v' \notin U$, by the same calculations as in Case 3, the probability that Property \textbf{(k)} holds
is at most $O \left( (d^{i/2}/n) \log n / \log \log n \right)$. Property \textbf{(j)}, however, might hold deterministically now, so we lose an additional factor of $O(d^{i/2}/n)$. By considering only Properties \textbf{(k)}, \textbf{(l)} and \textbf{(m)}, we get that
\begin{eqnarray*}
  \Prob{(X_{u',v',x',y'}=1 |  X_{u,v,x,y}=1)} &=& O \left( \left ( \frac{d^{i/2}}{n} \right)^3 \frac {\log n}{\log \log n} \right) \\
  &=& O \left( \Prob{(X_{u,v,x,y}=1)} q^{-16} \frac{n}{d^{i/2}} \frac {\log n} {\log \log n}  \right).
\end{eqnarray*}
On the other hand, since $u',x' \in U$, only a 
$$
O((|U|/n)^2) = O( (d^{i/2-1}/n) n^{-1/2} (\log^{-2} n) (\log \log n) ) = o( (d^{i/2}/n) q^{16} (\log \log n / \log n) )
$$ 
fraction of four-tuples is considered here, and so the contribution of this case to $X^2$ is negligible. 

\bigskip \noindent \textbf{Case 6:} $|\{u', v'\} \cap U| = 1$, and $|\{x',y'\} \cap U|=2$. 

By symmetry, suppose that $u' \in U$ and $v' \notin U$. 
By the same argument as in Case 4, the calculations involving Properties related to $x'$ are independent of those related to $y'$.
Since $v' \notin U$, by the same calculations as in Case 3, the probability that Properties \textbf{(k)}  and \textbf{(m)} both hold is at most $O \left( (d^{i/2}/n)^2 (\log n/\log \log n)^2 \right)$.
This time, Properties \textbf{(j)} and \textbf{(l)} might hold deterministically, so we lose an additional factor of $O((d^{i/2}/n)^2).$ 
By considering only Properties \textbf{(k)} and \textbf{(m)} we get that
$$
  \Prob{(X_{u',v',x',y'}=1 |  X_{u,v,x,y}=1)} = O \left( \Prob{(X_{u,v,x,y}=1)} q^{-16} \left( \frac{n}{d^{i/2}} \right)^2 \left( \frac {\log n} {\log \log n} \right)^2 \right).
$$
On the other hand, since $u',x',y' \in U$, only a 
$$
O((|U|/n)^3) = O( (d^{i/2-1}/n)^2 n^{-1/2} (\log^{-2} n) (\log \log n) ) = o( (d^{i/2}/n)^2 q^{16} (\log \log n / \log n)^2 )
$$ 
fraction of four-tuples is considered here, and the contribution of this case to $X^2$ is negligible. 

\bigskip \noindent \textbf{Case 7:} $|\{u', v'\} \cap U| = 2$, and $|\{x',y'\} \cap U|=1$. 

By symmetry, suppose that $x' \in U$ and $y' \notin U$. Since $y' \notin U$, as before, we get that Properties \textbf{(l)} and \textbf{(m)} hold with probability up to a $1+o(1)$ factor as in the unconditional case. By considering only these two properties we get that
$$
  \Prob{(X_{u',v',x',y'}=1 |  X_{u,v,x,y}=1)} = O \left( \Prob{(X_{u,v,x,y}=1)} q^{-16} \left( \frac{n}{d^{i/2}} \right)^2 \right).
$$
On the other hand, since $u',v',x' \in U$, only a 
$$
O((|U|/n)^3) = O( (d^{i/2-1}/n)^2 n^{-1/2} (\log^{-2} n) (\log \log n) ) = o( (d^{i/2}/n)^2 q^{16} )
$$ 
fraction of four-tuples is considered here, and the contribution of this case to $X^2$ is negligible.

\bigskip \noindent \textbf{Case 8:} $|\{u', v'\} \cap U| = 2$, and $|\{x',y'\} \cap U|=2$. 


First, let us observe that in order to have $X_{u',v',x',y'} = 1$ the vertices $u',v',x',y'$ have to lie on an induced cycle of length $2i$ (of course, this is a necessary condition only). Moreover, note that there is only one such cycle in the graph induced by $U$ (namely, the one going through $u,v,x,y$). Hence, in order for this necessary condition to hold ``for free,'' all four vertices of $u',v',x',y'$ have to be on this cycle.  Thus, there are only $(2i)^4=O( (\log n / \log \log n)^4 )=o(\E{X})$ such $4$-tuples of vertices, and this contribution is negligible. 
Otherwise, we observe that at least one edge of the cycle $u',v',x',y'$ is not yet present in the graph induced by $U$, say an edge on the path between $u'$ and $x'$. For this to happen, there must be an edge between $S(u',k)$ and $S(x',i/2-1-k)$ for some $k \in \{0,1, \ldots,i/2-1\}$ that is not yet exposed. By the same calculations as in Case 3, this happens with probability at most
$$
O \left( \frac {d^{i/2}}{n} \cdot \frac {\log n}{\log \log n} \right).
$$
Thus,
$$
  \Prob{(X_{u',v',x',y'}=1 |  X_{u,v,x,y}=1)} = O \left( \Prob{(X_{u,v,x,y}=1)} q^{-16} \left( \frac{n}{d^{i/2}} \right)^3 (\log n / \log \log n) \right).
$$
On the other hand, since $u',v',x',y' \in U$, only a 
$$
O((|U|/n)^4) = O( (d^{i/2-1}/n)^3 n^{-1/2} (\log^{-2} n) (\log \log n) ) = o( (d^{i/2}/n)^3 q^{16} (\log \log n/\log n))
$$ 
fraction of four-tuples is considered here, and the contribution of this case to $X^2$ is negligible. 
\end{proof}

Using Lemma~\ref{lem:firstmoment} and Lemma~\ref{lem:secondmoment},  Theorem~\ref{thm:main2} follows now easily by Chebyshev's inequality.
\section{Concluding remarks and open questions} We have shown that in $\mathcal{G}(n,p)$, for $p \gg \frac{\log^5 n}{n (\log \log n)^2}$ the hyperbolicity is (up to a possible difference of $1$) a monotone decreasing graph parameter.  In general, since trees as well as cliques have hyperbolicity $0$, the hyperbolicity is not monotone, but we conjecture that in $\mathcal{G}(n,p)$  with $p$ above the threshold of connectivity, the same behavior holds. Intuitively, if $p$ is close to the threshold of connectivity, then $\mathcal{G}(n,p)$ a.a.s.\ contains a lot of  long cycles, and there will not be many shortcuts, making the hyperbolicity of the graph large. After extending the definition of hyperbolicity to non-connected graphs by defining it as the maximum over all connected components, the situation is quite different. For $p < (1-\eps)/n$ for some $\eps > 0$, the hyperbolicity is $0$ a.a.s., but for $p > (1+\eps)/n$ for some $\eps>0$, the appearance of the giant component makes the hyperbolicity to tend to infinity a.a.s.\ (see also~\cite{sparse_graphs}). It would be interesting to investigate for which values of $p$ the hyperbolicity of $\mathcal{G}(n,p)$ is maximized, and what this value is. We also would like to know whether the hyperbolicity is monotone increasing up to its maximal value and then decreasing, or whether there  are several ``peaks.''

\section*{Acknowledgements} The first author would like to thank David Coudert and Guillaume Ducoffe for helpful discussions.

\end{document}